\newcommand\bc{\mathcal{B}}      
\newcommand\cc{\mathcal{C}}      
\newcommand\dc{\mathcal{D}}      
\newcommand\ec{\mathcal{E}}      
\newcommand\fc{\mathcal{F}}      
      \newcommand\hb{\mathbb{H}}
\newcommand\lc{\mathcal{L}}      
\newcommand\mc{\mathcal{M}}      
      \newcommand\nb{\mathbb{N}}
      \newcommand\qb{\mathbb{Q}}
\newcommand\rc{\mathcal{R}}      \newcommand\rb{\mathbb{R}}
\newcommand\uc{\mathcal{U}}
\theoremstyle{definition} 
 \newtheorem{definition}{Definition}[section]
\theoremstyle{plain}      
 \newtheorem{proposition}[definition]{Proposition}
 \newtheorem{theorem}[definition]{Theorem}
 \newtheorem{lemma}[definition]{Lemma}
\def\opn#1#2{\def#1{\operatorname{#2}}}
\opn\ess{ess}
\opn\loc{loc}
\opn\Card{Card}
\opn\mod{mod}
\opn\vol{vol}
\opn\base{base}
\opn\diam{diam}
\opn\id{id}
\opn\dist{dist}
\newcommand{\stob}{\overset{\; \circ}{B}}
\begin{document}

\title{On densest packings of equal balls
of ~$\rb^{n}$~ and Marcinkiewicz spaces}

\author{Gilbert Muraz and Jean-Louis Verger-Gaugry}

\address{
Institut Fourier - CNRS UMR 5582, Universit\'e
Grenoble I,\\
BP 74 - Domaine Universitaire, \\
38402 - Saint Martin d'H\`eres, France\\
email:\,
\tt{jlverger@ujf-grenoble.fr}}

\maketitle

\vspace{1.5cm}

\begin{abstract} 
We investigate,
by `` \`a la Marcinkiewicz" techniques 
applied to the (asymptotic) density function,
how dense systems of equal spheres of $\rb^{n},
n \geq 1,$ can be partitioned at infinity in order to
allow the computation of their density as
a true limit and not a limsup.
The density of a packing of equal balls
is the norm 1
of the characteristic function of the
systems of balls in the 
sense of Marcinkiewicz.
Existence Theorems
for densest sphere packings and completely saturated
sphere packings of maximal density are given new direct 
proofs.
\end{abstract}

\begin{classification}
52C17, 52C23.
\end{classification}

\begin{keywords}
Delone set, 
Marcinkiewicz norm,
density,
sphere packing.
\end{keywords}

\newpage

\tableofcontents  

\section{Introduction}
\label{S1}

The existence of densest sphere packings
in $\rb^{n}, n \geq 2,$ asked the question 
to know how they could be constructed.
The problem of constructing very dense
sphere packings between the bounds 
of Kabatjanskii-Levenstein and
Minkowski-Hlawka type bounds (see Fig. 1
in \cite{murazvergergaugry1}) remains open
\cite{bezdek}
\cite{cassels} 
\cite{conwaysloane}
\cite{gruberlekkerkerker}
\cite{handbook}
\cite{rogers} 
\cite{zong}. 
There are two problems: the first one 
is the determination
of the supremum  $\delta_n$ over all 
possible densities, 
$\delta_n$ being called the packing constant, 
as a function of $n$ only (for $n=3$ see
Hales \cite{hales});
the second one consists in characterizing the
(local, global) 
configuration of balls in a densest sphere packing,
namely for which the density is $\delta_n$.

The notion of complete saturation was introduced
by Fejes-Toth, Kuperberg and Kuperberg 
\cite{fejestothkuperbergkuperberg}.
Section \ref{S2} gives new direct proofs of
the existence Theorems for completely 
saturated sphere packings (see Bowen \cite{bowen}
for a proof with $\rb^{n}$ and
$\hb^{n}$ as ambient spaces) of maximal density and 
densest sphere packings
in $\rb^{n}$. For this purpose new metrics 
are introduced (Subsection \ref{S2.1})
on the space of
uniformly discrete sets 
(space of equal sphere packings), and this
leads to a continuity Theorem for the density
function (Theorem \ref{extract}).

Let $\Lambda$ be a uniformly discrete
set of $\rb^{n}$ of constant 
$r > 0$, that is a discrete point set
for which $\|x - y \| \geq r$ for all $x, y \in \Lambda$,
with equality at least for one couple
of elements of $\Lambda$,
and consider the system of spheres (in fact {\it balls})
~$\bc(\Lambda) = \{
\lambda + B(0, \frac{r}{2}) \mid \lambda \in \Lambda \}$,
where $B(c,t)$ denotes the closed ball 
of center $c$ and radius $t$.
Let $B = B(0,1/2)$. The fact that the density 
$$
\delta(\bc(\Lambda)) :=
\limsup_{T \to +\infty}
\left[
\mbox{vol}
\bigl(
\bigl(
\bigcup_{\lambda \in \Lambda}
(\lambda + B(0,r/2))
\bigr)
\bigcap
B(0,T)
\bigr)
/\mbox{vol}
(B(0,T))
\right]
$$
of
$\bc(\Lambda)$ is equal to the 
norm (``norm 1") of Marcinkiewicz
of the characteristic function 
$\chi_{\bc(\Lambda)}$ of 
$\bc(\Lambda)$
\cite{bertrandias2}
\cite{phamphuhien}
\cite{marcinkiewicz},
namely
\begin{equation}
\label{egalmain}
\delta(\bc(\Lambda)) = \|\chi_{\bc(\Lambda)}\|_{1},
\end{equation}
where, for all $p \in \rb^{+*}$
and all $f \in \lc^{p}_{loc}$
with
$\lc^{p}_{loc}$ the
space of
complex-valued functions
$f$ defined on $\rb^{n}$
whose $p$-th
power of the absolute value
$|f|^{p}$ is integrable over
any bounded measurable subset
of
$\rb^{n}$ for the Lebesgue measure,
\begin{equation}
\label{normep2b}
\|f\|_{p} := \limsup_{t \to +\infty}
|f|_{p,t},
\end{equation}
with
\begin{equation}
\label{moyennep}
|f|_{p,t} := \left(
\frac{1}{\mbox{vol}(t B)}
\int_{t B} \, |f(x)|^{p} dx
\right)^{1/p}, \qquad f \in
\lc^{p}_{loc},
\end{equation}
asks the following question: 
what can tell the theory of Marcinkiewicz spaces to
the problem of constructing 
very dense sphere packings ? Obviously
the problem of the determination
of the packing constant or more generally of
the density is associated with
the quotient space  $\lc^{p}_{loc}/ \rc$ where
$\rc$ is the 
Marcinkiewicz equivalence relation 
(Section \ref{S3}): the density
function is a class function, that is 
is well defined on the Marcinkiewicz space
$\mc^p$ with $p = 1$. 
For instance any finite cluster of spheres has the same
density, equal to  zero, as the empty packing 
(no sphere); the Marcinkiewicz class of
the empty sphere packing being much larger than the set
of finite clusters of spheres.
Then it suffices to understand the construction of
one peculiar sphere packing per Marcinkiewicz class.
It is the object of this note to precise 
the geometrical 
constraints given by such a construction.

Since any non-singular affine transformation $T$
on a system of balls $\bc(\Lambda)$ leaves its density
invariant (Theorem 1.7 in \cite{rogers}), 
namely 
\begin{equation}
\label{egalaff}
\delta(\bc(\Lambda)) = 
\delta(T(\bc(\Lambda))),
\end{equation}
we will only consider
packings of spheres of common radius $1/2$ 
in the sequel. It amounts to consider the space
$\uc\dc$ of uniformly discrete subsets of
$\rb^n$ of constant 1. Its elements will be called
$\uc\dc$-sets. Denote by $\overline{f}$ 
the class in $\mc^p = 
\lc^{p}_{loc}/ \rc$ of $f \in 
\lc^{p}_{loc}$, where
$\lc^{p}_{loc}$ is endowed with the
$\mc^{p}$-topology (Section \ref{S3}),
and by
$$\begin{array}{rccrc}
\nu :&
\uc\dc ~\to~ \lc^{1}_{loc},
& \quad \mbox{resp.}&
\overline{\nu} :&
\uc\dc ~\to~ \mc^{1}\\
&
\Lambda ~\to~ \chi_{\bc(\Lambda)}&
&
&\Lambda ~\to~ \overline{\chi_{\bc(\Lambda)}}
\end{array}
$$
\noindent
the (set-) embedding of ~$\uc\dc$~ in
~$\lc^{1}_{loc}$, resp. in ~$\mc^{1}$.

\begin{theorem}
\label{close}
The image $\nu(\uc\dc)$ in $\lc_{loc}^{1}
\cap \lc^{\infty}$,
resp. $\overline{\nu}(\uc\dc)$
in $\mc^{1}$,
is closed.
\end{theorem}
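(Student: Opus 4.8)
The plan is to reduce the statement to a compactness principle for $\uc\dc$-sets together with a continuity statement for the two embeddings. Throughout, a $\uc\dc$-set is a point set with $\|x-y\|\ge 1$ for all distinct $x,y$; this minimal--distance condition is \emph{closed} under pointwise limits, and it forces uniform local finiteness: for every $R>0$ the number $\Card(\Lambda\cap B(0,R))$ is bounded by the packing number $N(R,n)$, independently of $\Lambda$. This last fact is the source of all the compactness we use.

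First I would treat the image in $\lc^{1}_{loc}\cap\lc^{\infty}$. Let $\Lambda_{k}\in\uc\dc$ with $\chi_{\bc(\Lambda_{k})}\to f$ in $\lc^{1}_{loc}$, the functions being uniformly bounded by $1$ in $\lc^{\infty}$. Passing to a subsequence, the convergence holds almost everywhere, so $f\in\{0,1\}$ a.e. and $f=\chi_{A}$ for some measurable $A$. Using uniform local finiteness and a diagonal extraction over $B(0,R)$, $R\in\nb$, I would extract a further subsequence along which $\Lambda_{k}\cap B(0,R)$ converges, for each $R$, to a finite set in Hausdorff distance with matching cardinality for $k$ large; the union of these limits is a point set $\Lambda$. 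The closedness of the condition $\|x-y\|\ge 1$ gives $\Lambda\in\uc\dc$. Finally, since $\vol\bigl((\lambda_{k}+B(0,1/2))\triangle(\lambda+B(0,1/2))\bigr)\to 0$ as $\lambda_{k}\to\lambda$, and only $N(R+1,n)$ balls meet $B(0,R)$, one gets $\chi_{\bc(\Lambda_{k})}\to\chi_{\bc(\Lambda)}$ in $\lc^{1}_{loc}$; hence $A=\bc(\Lambda)$ up to a null set and $f=\nu(\Lambda)\in\nu(\uc\dc)$.

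For the image $\overline{\nu}(\uc\dc)$ in $\mc^{1}$ the same strategy cannot be carried out directly, because the Marcinkiewicz seminorm $\|\cdot\|_{1}=\limsup_{t}|\cdot|_{1,t}$ only records the \emph{asymptotic density} of a symmetric difference and is blind to the local geometry of the centers; a priori a sequence converging in $\mc^{1}$ need not converge locally, so no limiting packing can be read off the classes alone. Here I would instead use the metric introduced in Subsection \ref{S2.1}, with respect to which $\overline{\nu}$ is continuous, together with the extraction of convergent subsequences furnished by the continuity Theorem \ref{extract}. Given $\overline{\chi_{\bc(\Lambda_{k})}}\to\overline{f}$ in $\mc^{1}$, I would extract a subsequence $\Lambda_{k_{j}}$ converging in that metric to some $\Lambda\in\uc\dc$; by continuity $\overline{\nu}(\Lambda_{k_{j}})\to\overline{\nu}(\Lambda)=\overline{\chi_{\bc(\Lambda)}}$ in $\mc^{1}$, and uniqueness of the limit forces $\overline{f}=\overline{\chi_{\bc(\Lambda)}}\in\overline{\nu}(\uc\dc)$.

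The hard part will be precisely this last passage: reconciling the coarse, purely asymptotic $\mc^{1}$--convergence with the fine, local information needed to exhibit an actual limiting $\uc\dc$-set. The first part succeeds because $\lc^{1}_{loc}$ sees local volumes and thereby lets one recover the centers; the second part has no such local control, and everything rests on the metric of Subsection \ref{S2.1} simultaneously encoding the local configuration and the density, so that $\overline{\nu}$ becomes a homeomorphism onto its image and Theorem \ref{extract} supplies the convergent subsequence. One should also verify the harmless boundary book-keeping (balls entering or leaving $B(0,R)$) in the volume estimate of the first part, and confirm that the edge cases (the empty set and singletons) are covered by the convention fixing the constant to $1$.
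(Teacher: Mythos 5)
There is a genuine gap, and it sits in your first paragraph: you have proved closedness of $\nu(\uc\dc)$ for the wrong topology. The paper explicitly endows $\lc^{1}_{loc}$ with the $\mc^{1}$-topology, i.e.\ with the pseudo-metric $\|f\|_{1}=\limsup_{t\to+\infty}|f|_{1,t}$ (see the sentence just before the definition of $\nu$, and the fact that Theorem~\ref{close} is declared to be a reformulation of Theorem~\ref{couronne}, whose hypothesis is a Cauchy sequence for $\|\cdot\|_{1}$, not for convergence in $L^{1}$ of every ball). For this pseudo-metric, convergence carries \emph{no} local information: one may alter $\Lambda_{k}$ arbitrarily inside $B(0,k)$ without changing any of the quantities $\|\chi_{\bc(\Lambda_{j})}-\chi_{\bc(\Lambda_{k})}\|_{1}$, since a bounded perturbation has zero asymptotic density. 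Hence the a.e.-convergence and local Hausdorff extraction you propose cannot identify the limit class; and conversely, even if a subsequence of the $\Lambda_{k}$ converges locally to some $\Lambda$, this does not give $\|\chi_{\bc(\Lambda_{k})}-\chi_{\bc(\Lambda)}\|_{1}\to 0$, because the seminorm is determined entirely by the tail at infinity. This is precisely the obstruction you yourself articulate in your second paragraph --- but it afflicts both halves of the statement equally. It is the reason the paper's proof (Theorem~\ref{couronne}, Section~\ref{S4}) does something quite different: it splices together annular pieces $\Lambda_{m_{i}}\cap\cc(\lambda_{i}+1/2,\lambda_{i+1}-1/2)$ of the terms of a rapidly convergent subsequence, at exponentially growing radii $\lambda_{i+1}>2\lambda_{i}$, and shows by the explicit estimates on the sums $A$, $E$, $C$ that the resulting packing is $\mc^{1}$-equivalent to the limit, the empty shells $\cc(\lambda_{i}-1/2,\lambda_{i}+1/2)$ of width $1$ guaranteeing that the spliced set is still in $\uc\dc$.

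Your second half is a genuinely different route and could in principle be repaired, but as written it rests on two unestablished points. First, the continuity of $\overline{\nu}:(\uc\dc,D)\to\mc^{1}$ is not Theorem~\ref{extract}: that theorem only asserts continuity (indeed local constancy) of the \emph{density} $\Lambda\mapsto\|\overline{\nu}(\Lambda)\|_{1}$, i.e.\ of the norm of the image, which does not imply continuity of $\overline{\nu}$ itself. Continuity of $\overline{\nu}$ does follow from the pointwise pairing property (Theorem~\ref{metricuniv}~iii)): if $D(\Lambda,\Lambda')<\epsilon<1$ the pairing is a bijection moving each centre by less than $\epsilon/2$, so the symmetric difference $\bc(\Lambda)\,\triangle\,\bc(\Lambda')$ has asymptotic density $O(\epsilon)$ --- but you must supply this estimate. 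Second, the extraction of a $D$-convergent subsequence is not furnished by Theorem~\ref{extract} either; it is Lemma~\ref{subseq}, which relies on the compactness of the auxiliary spaces $(\uc\dc,d_{x})$ and a diagonal argument over $x\in\qb^{n}$, and which requires care since the paper itself notes that precompactness of $(\uc\dc,D)$ is lost. Even once these are filled in, your argument produces only the abstract existence of a representative $\Lambda$ with $\overline{f}=\overline{\chi_{\bc(\Lambda)}}$, whereas the paper's construction additionally exhibits the annular structure \eqref{limud} of that representative, which is the content actually exploited in the rest of the paper.
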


Theorem \ref{close} is a reformulation of the 
following more accurate theorem, since
$\mc^{p}$ is complete \cite{bertrandias1}
\cite{bertrandias2}.
For $0 \leq \lambda \leq \mu$
denote
$$\cc(\lambda, \mu) := 
\{x \in \rb^{n} \mid
\lambda \leq \|x\| \leq \mu \}$$
the closed annular region of space 
between the spheres centered at the origin
of respective radii $\lambda$ and $\mu$.

\begin{theorem}
\label{couronne}
Let $(\Lambda_{m})_{m \geq 1}$ be
a sequence of ~$\uc\dc$-sets
such that
the sequence
$(\chi_{\bc(\Lambda_{m})})_{m \geq 1}$
is a Cauchy sequence
for the pseudo-metric 
$\|\cdot\|_1$
on
~$\lc_{loc}^{1} \cap \lc^{\infty}$.
Then, 
there exist
\begin{itemize}
\item[(i)] a strictly
increasing sequence of
positive integers
$(m_{i})_{i \geq 1}$,
\item[(ii)]
a strictly increasing
sequence
of real numbers
$(\lambda_{i})_{i \geq 1}$~
with ~$\lambda_{i} \geq 1$~
and ~$\lambda_{i+1} > 2 \lambda_{i}$,
\end{itemize}
such that, with
\begin{equation}
\label{limud}
\Lambda =
\bigcup_{i \geq 1} \,
\Lambda_{m_{i}} \cap
\, \cc(\lambda_{i}+1/2,
\lambda_{i+1}-1/2),
\end{equation}
the two 
functions
$$\chi_{\bc(\Lambda)}~ \mbox{and} ~ 
\lim_{i \to +\infty} \, \chi_{\bc(\Lambda_{m_i})}$$
are $\mc^{1}$-equivalent.
As a consequence
\begin{equation}
\label{limdens}
\delta(\bc(\Lambda)) ~=~
\lim_{i \to +\infty}
\delta(\bc(\Lambda_{m_{i}})).
\end{equation}
\end{theorem}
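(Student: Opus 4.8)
\emph{Setup.} The plan is to trade the Cauchy sequence for a genuine limit, extract a subsequence, and then push the gluing radii $\lambda_i$ so far out that the assembled set $\Lambda$ becomes $\mc^1$-indistinguishable from that limit. Write $f_m := \chi_{\bc(\Lambda_m)}$. Since $\mc^1$ is complete and $(f_m)$ is $\|\cdot\|_1$-Cauchy, the classes $\overline{f_m}$ converge to a class $\overline g$; fix once and for all a representative $g \in \lc^1_{\loc}\cap\lc^\infty$, which is the function denoted $\lim_{i\to+\infty}\chi_{\bc(\Lambda_{m_i})}$ in the statement. I would take any strictly increasing $(m_i)$ and set $\eta_i := \|f_{m_i}-g\|_1 = \limsup_{s\to+\infty}|f_{m_i}-g|_{1,s}$, so that $\eta_i\to 0$ (every subsequence converges to $\overline g$). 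Fixing a sequence $\epsilon_i\to 0$ with $0\le\epsilon_i\le 1$, I would then build $(\lambda_i)$ by induction with $\lambda_1\ge 1$, $\lambda_{i+1}>2\lambda_i$, and, crucially, with each $\lambda_i$ large enough that
\[
\sup_{s \ge \lambda_i} |f_{m_i}-g|_{1,s} < \eta_i + \epsilon_i ,
\]
which is possible because $\sup_{s\ge S}|f_{m_i}-g|_{1,s}$ decreases to $\eta_i$ as $S\to+\infty$. Define $\Lambda$ by \eqref{limud}.

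\emph{Localization.} A center $\lambda$ lying in the band $\cc(\lambda_i+\tfrac12,\lambda_{i+1}-\tfrac12)$ has its ball $\lambda+B$ contained in $\cc(\lambda_i,\lambda_{i+1})$; hence balls coming from distinct bands have disjoint interiors, and two centers on opposite sides of a sphere $\|x\|=\lambda_{i+1}$ are at distance $\ge 1$, so $\Lambda\in\uc\dc$. More importantly, for every $x$ with $\lambda_i+1 < \|x\| < \lambda_{i+1}-1$ one has $\chi_{\bc(\Lambda)}(x)=f_{m_i}(x)$: any center within distance $\tfrac12$ of such an $x$ has norm in $[\lambda_i+\tfrac12,\lambda_{i+1}-\tfrac12]$, hence lies in $\Lambda_{m_i}$ if and only if it lies in $\Lambda$. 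Thus $\chi_{\bc(\Lambda)}-g$ and $f_{m_i}-g$ coincide on the bulk of the $i$-th annulus and can differ only inside the shells $\{\,|\,\|x\|-\lambda_i|\le 1\,\}$, of volume $O(\lambda_i^{n-1})$.

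\emph{The estimate.} For large $t$ I would bound $|\chi_{\bc(\Lambda)}-g|_{1,t}$ by splitting $tB$ into the full annuli it contains (indices $1,\dots,N-1$), the last partial annulus (index $N$), and the shells. On the bulk of annulus $i$ the integrand is $|f_{m_i}-g|$, so the full-annulus term contributes to the mean at most $\vol(B(0,\lambda_{i+1}))/\vol(tB)$ times $|f_{m_i}-g|_{1,\lambda_{i+1}}\le \eta_i+\epsilon_i$, a weight comparable to $(\lambda_{i+1}/\lambda_N)^n$; the partial annulus contributes at most $|f_{m_N}-g|_{1,t}\le\eta_N+\epsilon_N$; and the shells contribute $O(\lambda_N^{n-1})/\vol(tB)=O(1/\lambda_N)\to 0$. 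Because $\lambda_{i+1}>2\lambda_i$, the volume weights decay geometrically, so the annuli sum is dominated by $\sum_{j\ge 0}2^{-nj}(\eta_{N-1-j}+\epsilon_{N-1-j})$, which tends to $0$ as $N\to+\infty$ by dominated convergence for series (each term tends to $0$ while being dominated by $2\cdot 2^{-nj}$). Letting $t\to+\infty$ forces $N\to+\infty$, whence $\|\chi_{\bc(\Lambda)}-g\|_1=0$, i.e. the asserted $\mc^1$-equivalence.

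\emph{Consequence and main obstacle.} Equation \eqref{limdens} is then immediate: by \eqref{egalmain}, $\delta(\bc(\Lambda))=\|\chi_{\bc(\Lambda)}\|_1$, and since the Marcinkiewicz norm is continuous and constant on $\mc^1$-classes, $\|\chi_{\bc(\Lambda)}\|_1=\|g\|_1=\lim_{i}\|f_{m_i}\|_1=\lim_{i}\delta(\bc(\Lambda_{m_i}))$. I expect the one genuinely delicate point to be the annuli estimate: $\eta_i$ controls only the global means $|f_{m_i}-g|_{1,s}$ as a $\limsup$, whereas the argument needs a bound on the integral over a single \emph{finite} annulus $\cc(\lambda_i,\lambda_{i+1})$. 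The device that resolves this is to place each inner radius $\lambda_i$ beyond the threshold where $\sup_{s\ge\lambda_i}|f_{m_i}-g|_{1,s}$ has already settled near $\eta_i$, and then to exploit the spacing $\lambda_{i+1}>2\lambda_i$ so that the older annuli, where $\eta_i$ is not yet small, occupy a geometrically negligible fraction of $tB$.
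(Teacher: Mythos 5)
Your argument is correct and its geometric core is the same as the paper's: an annular decomposition with $\lambda_{i+1}>2\lambda_i$, inner radii pushed out by a tail-supremum condition, and the observation that the old annuli occupy a geometrically negligible fraction of $tB$. Where you differ is in the analytic bookkeeping. The paper never invokes completeness of $\mc^{1}$ at this stage: it selects a subsequence with $\|\chi_{\bc(\Lambda_{m_i})}-\chi_{\bc(\Lambda_{m_{i+1}})}\|_1\le 2^{-(i+1)}$, controls each annulus by telescoping consecutive differences through the functional $R_{\lambda}(f)=\sup_{t\ge \lambda+1/2}|f|_{1,t}$ (your $\sup_{s\ge\lambda_i}|f_{m_i}-g|_{1,s}$ is the exact analogue), and proves directly that the explicitly constructed function $H$ \emph{is} the $\mc^{1}$-limit of the sequence, with the quantitative rate $\|H-\chi_{\bc(\Lambda_{m_i})}\|_1\le c\,2^{-(i-1)}$. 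You instead take the limit class $\overline{g}$ from Theorem \ref{marcincompl} as given and estimate $\|\chi_{\bc(\Lambda)}-g\|_1$ in one pass, which lets you use an arbitrary subsequence and gives a slightly cleaner dominated-convergence finish, at the price of leaning on completeness and of losing the explicit convergence rate (which the paper also uses to pass from the subsequence back to the full sequence via $\|H-\chi_{\bc(\Lambda_q)}\|_1\le\|\chi_{\bc(\Lambda_{m_i})}-\chi_{\bc(\Lambda_q)}\|_1+\|H-\chi_{\bc(\Lambda_{m_i})}\|_1$). Both routes are legitimate.

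One technical point you should tidy up: your shell estimate $O(\lambda_N^{n-1})/\vol(tB)$ requires the integrand $|\chi_{\bc(\Lambda)}-g|$ to be essentially bounded on the shells, but the completeness theorem only hands you a representative $g\in\lc^{1}_{\loc}$ of the limit class, not one in $\lc^{\infty}$. This is easily repaired: replace $g$ by $\tilde g=\min(\max(g,0),1)$; since each $f_{m_i}$ takes values in $\{0,1\}$, one has $|f_{m_i}-\tilde g|\le|f_{m_i}-g|$ pointwise, so $\tilde g$ is still a representative of the limit class and is bounded by $1$. Alternatively, follow the paper and take as limit representative the explicit characteristic function $H$, which is bounded by construction. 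With that fix your proof is complete.
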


The situation is the following
for a (densest) sphere packing $\bc(\Lambda)$ of
$\rb^n$ for which $\delta(\bc(\Lambda)) = \delta_n$ :
\begin{itemize}
\item[$\ast$] either it cannot be 
reached by a sequence
of sphere packings 
such as in Theorem \ref{couronne}, 
in which case there
is an {\it isolation phenomenon},
\item[$\ast$] or there 
exists at least one sequence 
of sphere packings such as in 
Theorem \ref{couronne}, and 
it
is Marcinkiewicz - equivalent to a
sphere packing
having the asymptotic
annular structure given 
by Theorem \ref{couronne}, where the sequence
of thicknesses of the annular portions 
exhibit an exponential growth. 
\end{itemize}

The sharing of space in 
annular portions as given by 
Theorem \ref{couronne}
may allow constructions of very dense
packings of spheres layer-by-layer in each
portion independently, since
the intermediate
regions $\cc(\lambda_i - 1/2, \lambda_i + 1/2)$
are all of constant thickness 1 which is twice the 
common ball radius 1/2.
These intermediate 
regions do not contribute to the
density so that they can be filled up or not by spheres. 
However
the existence of
such unfilled spherical gaps are not
likely to provide completely 
saturated packings, at least for $n=2$ 
\cite{kuperbergkuperbergkuperberg}.

Note that the value $2$ which controls
the exponential sequence of radii $(\lambda_i)_i$
by $\lambda_{i+1} > 2 \lambda_i$ in
Theorem \ref{couronne} (ii) can be replaced by
any value $a > 1$. This is important for 
understanding constructions of sphere packings
iteratively on the dimension $n$: 
indeed, chosing $a > 1$ sufficiently small
brings the problem back to fill
up first one layer in a as dense as possible way, 
therefore in dimension $n-1$, then propagating 
towards the orthogonal direction exponentially.

The terminology
{\it density} is usual in the field of lattice
sphere packings, while the terminology
{\it asymptotic measure}, therefore
{\it asymptotic density}, is usual 
in Harmonic Analysis,
both meaning the same in the 
present context.

\vspace{0.5cm}

\section{Densest sphere packings and complete saturation}
\label{S2}

The set ~$SS$~ of
systems of equal spheres
of radius ~$1/2$~ and
the set ~$\uc\dc$~
are in one-to-one correspondence: 
$\Lambda =
(a_{i})_{i \in \nb} \in \uc\dc$~ is
the set of sphere centres of
~$\bc(\Lambda) =
\{a_{i} + B ~|~ i \in \nb\}
\in SS$. More conveniently we will use
the set $\uc\dc$ of point sets of $\rb^n$
instead of $SS$. The subset of $\uc\dc$ of
finite uniformly discrete sets of constant $1$ 
of $\rb^n$ is denoted by $\uc\dc_f$.

\subsection{A metric on $\uc\dc$
invariant by the rigid motions of
$\rb^{n}$}
\label{S2.1}

Denote by $O(n,\rb)$ the $n$-dimensional
orthogonal group of $n \times n$ matrices
$M$, i.e. such that $M^{-1}=\mbox{}^{t}M$.
A {\it rigid motion} (or
an {\it Euclidean displacement}) is
an ordered pair
~$(\rho, t)$~ with ~$\rho \in O(n, \rb)$~ and
~$t \in \rb^{n}$ \cite{charlap}. 
The composition of
two rigid motions is given by
~$(\rho, t)(\rho',t') = (\rho \rho', \rho(t') + t)$~
and the group of rigid motions 
is the split extension of ~$O(n, \rb)$~ by
~$\rb^{n}$ (as a semi-direct product). 
It is endowed with the 
usual topology. 
Theorem \ref{UDselection}, obtained as a generalization
of the Selection Theorem of Mahler
\cite{chabauty}
\cite{gruberlekkerkerker}
\cite{mahler}
\cite{martinet},
gives the existence of
a metric $d$ on $\uc\dc$
\cite{murazvergergaugry2} which extends the Hausdorff
metric on the subspace 
$\uc\dc_{f}$. The metric $d$ 
is not invariant by
translation. From it, adding to the construction
of $d$ some additional 
constraints so that 
it gains in invariant properties 
(Proposition \ref{UDstructure} iii) 
proved in
Section \ref{S5}),
a new metric $D$, invariant by translation
and by the group of rigid motions of
$\rb^n$ (Theorem \ref{metricuniv} 
proved in Section \ref{S6}), 
can be constructed, giving a new topology to
$\uc\dc$, suitable for studying the continuity 
of the density function 
(Theorem \ref{extract}). 

%

\begin{theorem}
\label{UDselection}
The set $\uc\dc$ can be endowed with a metric
$d$ such that the topological space 
$(\uc\dc, d)$ is compact and such that the Hausdorff
metric $\Delta$ on $\uc\dc_f$ is compatible
with the restriction of the topology of
$(\uc\dc, d)$ to $\uc\dc_f$.
\end{theorem}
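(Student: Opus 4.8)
The plan is to obtain $d$ from the local geometry of $\uc\dc$-sets, in the spirit of the Chabauty--Mahler circle of ideas, and to read off compactness from a selection (diagonal extraction) argument. For $\Lambda,\Lambda'\in\uc\dc$ and an integer $k\ge 1$, let $\Delta_k(\Lambda,\Lambda')$ be the Hausdorff distance between the truncations $\Lambda\cap B(0,k)$ and $\Lambda'\cap B(0,k)$, with the conventions that $\Delta_k=0$ when both truncations are empty and $\Delta_k$ is capped at $1$ when exactly one is empty. I would then set
$$d(\Lambda,\Lambda') \;:=\; \sum_{k\ge 1} 2^{-k}\,\min\bigl(1,\Delta_k(\Lambda,\Lambda')\bigr),$$
and compare this on $\uc\dc_f$ with the genuine Hausdorff metric $\Delta$. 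Symmetry and the triangle inequality are inherited termwise from the $\Delta_k$, and the separation axiom $d(\Lambda,\Lambda')=0\Rightarrow\Lambda=\Lambda'$ holds because two distinct $\uc\dc$-sets, being uniformly discrete, already differ inside some fixed ball $B(0,k)$.

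The conceptual heart is compactness, where the generalisation of Mahler's Selection Theorem enters. Uniform discreteness of constant $1$ gives uniform local finiteness: the open balls of radius $1/2$ about the points of any $\uc\dc$-set are pairwise disjoint, so such a set meets $\overline{B(0,k)}$ in at most $N_k:=\vol(B(0,k+1/2))/\vol(B(0,1/2))$ points. Hence, for each fixed $k$, the truncations $\Lambda\cap B(0,k)$ run through configurations of at most $N_k$ points of the compact region $\overline{B(0,k)}$, a family that is closed, and therefore compact, for the Hausdorff metric. Given any sequence $(\Lambda_m)$ in $\uc\dc$, a diagonal procedure over $k=1,2,\dots$ extracts a subsequence $(\Lambda_{m_i})$ whose truncations converge for every $k$; the candidate limit $\Lambda$ is the union of these limiting configurations, consistent across scales. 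Two verifications remain: that $\Lambda$ is again uniformly discrete of constant $\ge 1$ (the inequality $\|x-y\|\ge 1$ passes to Hausdorff limits, since two distinct limit points are approached by eventually distinct, hence $1$-separated, points), and that $d(\Lambda_{m_i},\Lambda)\to 0$ (from $\Delta_k\to 0$ for each $k$ together with the tail bound $\sum_{k>K}2^{-k}=2^{-K}$).

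For the compatibility with $\Delta$ on $\uc\dc_f$, one inclusion and the bounded case are straightforward. If $\Delta(A,B)$ is small then every truncation agrees up to that error, so $d(A,B)$ is small; and on any subfamily of finite sets contained in a common ball $B(0,R)$ the two metrics induce the same topology, because on the compact space $\kc(\overline{B(0,R)})$ the local comparison reduces to the Hausdorff metric. Uniform discreteness again plays a role: within such a ball the points of a $\uc\dc_f$-set are $1$-separated, so a small $\Delta_k$ forces a bijective near-matching of the points of $A$ and $B$, which is what pins the topology down on bounded families.

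The main obstacle I anticipate is twofold. First, the truncation $\Lambda\cap B(0,k)$ jumps as a point crosses the sphere $\partial B(0,k)$, so $\Delta_k$ is discontinuous and the naive $d$ above need not induce exactly the intended topology; I would remedy this by averaging the truncation radius, replacing $\Delta_k$ by $\int_k^{k+1}\Delta_r\,dr$, which smooths out the crossings. Second, and more fundamentally, the two demands of the theorem pull against each other: compactness forces a configuration with a point escaping to infinity to drop that point in the limit, whereas the Hausdorff metric on $\uc\dc_f$ still registers it, so that a sequence such as $A\cup\{m e_1\}$ is treated differently by the two. Reconciling this — engineering a single $d$ that is simultaneously compact and restricts faithfully to $\Delta$ on the finite sets, by identifying the correct class of limit configurations — is the crux of the construction, and it is exactly here that the precise definition of $d$ and the selection argument generalising Mahler's Theorem carry the weight of the proof.
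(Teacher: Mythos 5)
The paper itself does not reprove this statement: its ``proof'' is a citation of Theorem 1.2 in \cite{murazvergergaugry2}, and what Section \ref{S5} recalls is the actual construction of $d$, which is genuinely different from yours --- a supremum of pseudo-metrics $d_{\alpha,(D,E)}$ built from counting-type functionals $\phi_{\bc_m}$ over collections of small balls, normalized by $1/2+\|\alpha\|+\sum_q\|\alpha-c_q^{(m)}\|$, rather than a geometrically weighted series of truncated Hausdorff distances. Your compactness argument is sound and is indeed the standard Mahler--Chabauty selection mechanism: uniform local finiteness from the packing condition, diagonal extraction over truncation radii, stability of the separation $\|x-y\|\geq 1$ under local limits. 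The boundary-crossing discontinuity of $\Delta_k$ that you flag is a real but routine technicality, and your averaging remedy is acceptable.

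The genuine gap is the one you name yourself and then leave open: the compatibility of the restriction of $d$ to $\uc\dc_f$ with the Hausdorff metric $\Delta$. For the metric you actually define this clause fails: taking $\Lambda_m=\{0, m e_1\}$ one has $\Delta_k(\Lambda_m,\{0\})=0$ for all $k<m$, hence $d(\Lambda_m,\{0\})\leq 2^{-m+1}\to 0$, while $\Delta(\Lambda_m,\{0\})=m\to\infty$; so the $d$-topology and the $\Delta$-topology do not coincide on $\uc\dc_f$, and your one-directional remark (``small $\Delta$ implies small $d$'') gives only half of what ``compatible'' requires. Your closing paragraph explicitly defers this reconciliation to ``the precise definition of $d$ and the selection argument,'' but that reconciliation is precisely the substantive content of the theorem: a proof must either exhibit a metric for which both clauses hold and verify the second one, or make precise in what sense $\Delta$ is compatible with the restricted topology and check it. As written, the proposal establishes compactness for a candidate metric that demonstrably violates the second half of the statement, and so does not prove the theorem; the missing work is exactly what is carried out in \cite{murazvergergaugry2} for the normalized counting metric recalled in Section \ref{S5}.
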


\begin{proof}
Theorem 1.2 in \cite{murazvergergaugry2}.
\end{proof}


\begin{proposition}
\label{UDstructure}
There exists a metric ~$d$~ on
~$\uc\dc$~ such that: 
\begin{itemize}
\item[i)]
the space $(\uc\dc, d)$
is compact,
\item[ii)] the Hausdorff metric
on $\uc\dc_f$ is compatible 
with the restriction
of the topology of
$(\uc\dc, d)$ to $\uc\dc_f$,
\item[iii)]
$d(\Lambda, \Lambda') =
d(\rho(\Lambda), \rho(\Lambda'))$
for all $\rho \in
O(n, \rb)$ and $\Lambda, \Lambda' \in
\uc\dc$.
\end{itemize}
\end{proposition}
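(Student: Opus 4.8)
The plan is to manufacture the required metric from the one supplied by Theorem \ref{UDselection} by symmetrising over the orthogonal group, exploiting that $O(n,\rb)$ is compact. Write $d_0$ for a metric on $\uc\dc$ as in Theorem \ref{UDselection}, so that $(\uc\dc, d_0)$ is compact and $d_0$ induces on $\uc\dc_f$ the topology of the Hausdorff metric $\Delta$. Since every $\rho \in O(n,\rb)$ is an isometry of $\rb^n$ fixing the origin, it preserves both the defining inequality and its equality case, hence maps $\uc\dc$ into itself and $\uc\dc_f$ into itself. The one fact on which everything rests, which I would record first, is that the action
\[ O(n,\rb) \times \uc\dc \longrightarrow \uc\dc, \qquad (\rho, \Lambda) \longmapsto \rho(\Lambda), \]
is jointly continuous for $d_0$. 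This is where the geometric content of the construction of \cite{murazvergergaugry2} enters: two $\uc\dc$-sets are $d_0$-close exactly when they almost coincide inside a large centred ball, a relation preserved by an origin-fixing isometry, and $\rho_k \to \rho$ together with $\Lambda_k \to \Lambda$ forces $\rho_k(\Lambda_k) \to \rho(\Lambda)$ because rotations are equicontinuous on each ball $B(0,R)$.

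Granting this, I would set
\[ d(\Lambda, \Lambda') := \sup_{\rho \in O(n,\rb)} d_0\bigl(\rho(\Lambda), \rho(\Lambda')\bigr), \qquad \Lambda, \Lambda' \in \uc\dc. \]
Symmetry and the triangle inequality pass through the supremum termwise; $d(\Lambda,\Lambda') = 0$ forces $d_0(\Lambda,\Lambda')=0$ on taking $\rho = \id$, whence $\Lambda = \Lambda'$; and $d$ is finite because $d_0$ is bounded on the compact space $(\uc\dc, d_0)$. Property (iii) is exactly the reason for the construction: for $\sigma \in O(n,\rb)$, as $\rho$ runs over $O(n,\rb)$ so does $\rho\sigma$, so the change of variable $\tau = \rho\sigma$ gives $d(\sigma(\Lambda), \sigma(\Lambda')) = \sup_{\tau} d_0(\tau(\Lambda), \tau(\Lambda')) = d(\Lambda, \Lambda')$.

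It remains to see that $d$ defines the same topology as $d_0$, which immediately transfers (i) and (ii). One inclusion is free: taking $\rho = \id$ gives $d \ge d_0$, so $d$-convergence implies $d_0$-convergence. For the converse I would use compactness decisively: the function $(\rho, \Lambda, \Lambda') \mapsto d_0(\rho(\Lambda), \rho(\Lambda'))$ is continuous on the compact product $O(n,\rb) \times \uc\dc \times \uc\dc$ by joint continuity of the action, hence uniformly continuous there; comparing $d_0(\rho(\Xi), \rho(\Lambda))$ with its value $0$ at $\Xi = \Lambda$ then yields, for every $\varepsilon > 0$, a $\delta > 0$ such that $\sup_{\rho} d_0(\rho(\Xi), \rho(\Lambda)) \le \varepsilon$ whenever $d_0(\Xi, \Lambda) < \delta$, that is, $d_0$-convergence implies $d$-convergence. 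Thus $d$ and $d_0$ are topologically equivalent, so $(\uc\dc, d)$ is compact, and on $\uc\dc_f$ the $d$-topology coincides with the $d_0$-topology, which is that of $\Delta$. The main obstacle is the very first step, namely verifying the joint continuity of the $O(n,\rb)$-action against the specific metric $d_0$ of Theorem \ref{UDselection}; once that geometric fact is secured, the symmetrisation and the compactness estimate are routine. One could equally replace the supremum by the average $\int_{O(n,\rb)} d_0(\rho(\Lambda), \rho(\Lambda'))\, d\mu(\rho)$ against normalised Haar measure $\mu$, with the same conclusions.
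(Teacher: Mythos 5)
Your route is genuinely different from the paper's. The paper does not symmetrise a pre-existing metric: it re-opens the construction of $d$ from \cite{murazvergergaugry2} and imposes the extra condition \eqref{sondeii} that the probe function $f$ be radial; invariance iii) then follows because a rotation $\rho$ permutes the defining family of pseudo-metrics ($d_{\alpha,(D,E)}\mapsto d_{\rho(\alpha),(\rho(D),E)}$, which is the content of Lemma \ref{rotation}) and the supremum is taken over the whole family, while i) and ii) are inherited for free because the topology of $(\uc\dc,d)$ is known to be independent of the choice of $f$ subject to \eqref{sondei} and \eqref{sondeiii}. Your symmetrisation $d=\sup_{\rho}d_0(\rho\,\cdot\,,\rho\,\cdot\,)$ over the compact group is a clean, construction-independent alternative, and your verifications of the metric axioms, of iii) by the change of variable $\tau=\rho\sigma$, and of the transfer of i) and ii) once topological equivalence is known, are all correct.

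The weak point is exactly the one you flag yourself: the joint continuity of $(\rho,\Lambda)\mapsto\rho(\Lambda)$ on $O(n,\rb)\times(\uc\dc,d_0)$. This is not part of the statement of Theorem \ref{UDselection} --- that theorem does not even assert that a single $\rho$ acts continuously --- so your proof is not self-contained at its load-bearing step. To establish it you must go back into the explicit formula \eqref{ad1} for $d_0$ (or at least into a local-matching description of its topology, e.g.\ via the pairing estimates of Proposition 3.6 of \cite{murazvergergaugry2}) and run an equicontinuity argument for rotations on large centred balls, taking care of points entering and leaving $B(0,R)$ and of the degenerate elements $\emptyset$ and the singletons. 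That is roughly the same amount of construction-specific work as the paper's direct computation, so the symmetrisation does not actually save labour, though the statement is true and your compactness/uniform-continuity deduction from it is sound. Two smaller remarks: the heuristic ``$d_0$-close iff the sets almost coincide inside a large centred ball'' is the right picture but is itself something to be extracted from \eqref{ad1}; and the closing claim that the Haar average yields ``the same conclusions'' is not immediate, since the average no longer dominates $d_0$, so the implication from smallness of the average back to $d_0$-smallness would need a separate argument --- the supremum version is the safe one.
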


Since the density of a sphere
packing is left invariant by any
non-singular affine transformation
(\eqref{egalaff}; Theorem 1.7 in Rogers \cite{rogers}),
it is natural to construct metrics 
on $\uc\dc$ which are at least 
invariant by the translations  
and by the orthogonal group of
$\rb^n$. Such a metric is given by 
the following theorem. 

\begin{theorem}
\label{metricuniv}
There exists a metric ~$D$~ on ~$\uc\dc$~
such that: 
\begin{itemize}
\item[i)] 
~$D(\Lambda_{1}, \Lambda_{2}) =
$
$D(\rho(\Lambda_{1}) $
$+ t, \rho(\Lambda_{2}) + t)$~ 
for all ~$t \in \rb^{n}, \rho \in O(n, \rb^{n})$~ and
all ~$\Lambda_{1}, \Lambda_{2} \in \uc\dc$,
\item[ii)]
~the space ~$(\uc\dc, D)$~
is complete and locally compact,
\item[iii)]
~(pointwise pairing property)
~ for all
non-empty
~$\Lambda, \Lambda' \in \uc\dc$~
such that ~$D(\Lambda, \Lambda') < \epsilon$,
each point ~$\lambda \in \Lambda$~ is associated with
a unique point ~$\lambda' \in \Lambda'$~
such that
~$\|\lambda - \lambda'\| < \epsilon/2$,
\item[iv)]
~the action of the group of rigid motions
~$O(n, \rb) \ltimes \rb^{n}$~ on
~$(\uc\dc, D):
((\rho, t),\Lambda) \to (\rho, t) \cdot \Lambda =
\rho(\Lambda) + t$~ is such that
its subgroup of translations ~$\rb^{n}$~ acts 
continuously
on ~$\uc\dc$. 
\end{itemize}
\end{theorem}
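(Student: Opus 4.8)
The plan is to construct $D$ starting from the $O(n,\rb)$-invariant, compact metric $d$ provided by Proposition \ref{UDstructure}, upgrading its invariance from the orthogonal group to the full group of rigid motions while trading global compactness for local compactness. The guiding idea is to measure the distance between two nonempty $\uc\dc$-sets by the least \emph{uniform displacement} realizing a matching of one onto the other: informally, $D(\Lambda,\Lambda')$ is a truncated and suitably regularized form of
\[
\inf_{\phi}\ \sup_{\lambda\in\Lambda}\ \|\lambda-\phi(\lambda)\|,
\]
where $\phi$ ranges over the bijections $\Lambda\to\Lambda'$, the infimum over an empty family of bijections being set to the truncation threshold. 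Since every rigid motion acts as a Euclidean isometry and carries bijections to bijections, this quantity is invariant under the simultaneous action $(\rho,t)\cdot\Lambda=\rho(\Lambda)+t$, which is exactly property (i). The triangle inequality follows by composing matchings, $\sup_\lambda\|\lambda-\psi\phi(\lambda)\|\le\sup_\lambda\|\lambda-\phi(\lambda)\|+\sup_\mu\|\mu-\psi(\mu)\|$, and truncation preserves it; separation holds because a matching of arbitrarily small displacement forces, by uniform discreteness, each point of $\Lambda$ to lie in $\Lambda'$ and conversely, so $D(\Lambda,\Lambda')=0$ gives $\Lambda=\Lambda'$.

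Properties (iii) and (iv) should then drop out of the construction. For the pointwise pairing, if $D(\Lambda,\Lambda')<\epsilon$ with $\epsilon$ below the truncation threshold, the witnessing matching satisfies $\|\lambda-\phi(\lambda)\|<\epsilon/2$ for every $\lambda$; because $\Lambda'$ is uniformly discrete of constant $1$, the balls $B(\lambda,\epsilon/2)$ have radius $<1/2$ and hence meet $\Lambda'$ in at most one point, so $\phi(\lambda)$ is the \emph{unique} point of $\Lambda'$ within $\epsilon/2$ of $\lambda$, which is (iii). For (iv), the explicit matching $\lambda\mapsto\lambda+t$ yields $D(\Lambda,\Lambda+t)\le\min(1,2\|t\|)$, so $t\mapsto\Lambda+t$ is continuous at $t=0$ uniformly in $\Lambda$; translation invariance and the triangle inequality then upgrade this to joint continuity of the $\rb^{n}$-action. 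It is worth noting that the analogous estimate fails for rotations, since $\|\lambda-\rho\lambda\|$ grows with $\|\lambda\|$, which explains why (iv) asserts continuity only for the translation subgroup and not for the whole group of rigid motions.

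The hard part, and the principal obstacle of the proof, is property (ii): local compactness together with completeness. The tension to overcome is structural, since translation invariance forces a uniform, sup-type comparison, whereas local compactness requires small balls to be precompact; the regularization alluded to above is precisely what must be tuned so that both survive. I would establish local compactness by exhibiting, around each $\uc\dc$-set $\Lambda$, a compact neighbourhood: for $\epsilon<1/2$ the pairing (iii) identifies the ball $\{\Lambda':D(\Lambda,\Lambda')\le\epsilon\}$ with a family of displacement fields attached to $\Lambda$, and the uniform discreteness bound on the number of points per unit region confines the admissible local patterns to a $d$-precompact family; one then compares the $D$-topology with the compact $d$-topology of Proposition \ref{UDstructure} near $\Lambda$ to conclude. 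For completeness, a $D$-Cauchy sequence $(\Lambda_m)$ eventually lies within $D<1/2$, so (iii) supplies compatible matchings whose displacements form a uniformly Cauchy system; their limit defines a candidate limit set, and the delicate point is to verify that this limit is again a genuine $\uc\dc$-set, uniformly discrete of constant exactly $1$, controlling both the minimal-separation constant in the limit and any escape of points to infinity by means of the same uniform discreteness bound. Once this verification is carried out, the metric $D$ so constructed satisfies (i)--(iv).
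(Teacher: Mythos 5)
Your construction is genuinely different from the paper's. The paper does not define $D$ by an optimal--matching formula: it takes the $O(n,\rb)$-invariant compact metric $d$ of Proposition \ref{UDstructure}, forms the translated family $d_{x}(\Lambda,\Lambda'):=d(\Lambda-x,\Lambda'-x)$, and sets $D:=\sup_{x\in\rb^{n}}d_{x}$. Invariance (i) is then automatic for translations and follows from Lemma \ref{rotation} for rotations; completeness is obtained by diagonal extraction over $x\in\qb^{n}$, using the compactness of each $(\uc\dc,d_{x})$ and the identity $\sup_{x\in\qb^{n}}d_{x}=\sup_{x\in\rb^{n}}d_{x}$; the pairing property (iii) is imported from the pointwise pairing property of the $d_{x}$ (Lemma \ref{pairingr1}, i.e.\ Proposition 3.6 of \cite{murazvergergaugry2}) by letting $x$ run through the points of $\Lambda$; and (iv) is deduced from (iii) essentially as you do. Your truncated bottleneck distance is, for small values, equivalent to the paper's $D$ and to the Hausdorff metric $\Delta$, and your treatment of (i), (iii) and (iv) --- including the normalizing factor $2$ and the remark that rotations cannot act continuously --- is correct and more self-contained than the paper's, which leans on \cite{murazvergergaugry2}. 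What you lose by bypassing $d$ entirely is the paper's route to completeness through the compactness of the spaces $(\uc\dc,d_{x})$; your direct argument (uniformly Cauchy displacement fields, closedness of the separation condition under uniform limits) is completable, but you still owe the verification you yourself flag.

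The genuine gap is local compactness. Your plan --- confine the admissible local patterns to a $d$-precompact family and ``compare the $D$-topology with the compact $d$-topology near $\Lambda$'' --- cannot close, because the two topologies are of different natures: compactness of $(\uc\dc,d)$, or of each $(\uc\dc,d_{x})$, only yields subsequences converging in a weighted, essentially local sense, whereas $D$-convergence is a uniform, sup-over-all-base-points condition. Concretely, let $\Lambda\in\uc\dc$ contain points $v_{1},v_{2},\dots$ pairwise far apart and far from the rest of $\Lambda$, fix a unit vector $u$ and $0<\epsilon<1/2$, and let $\Lambda'_{m}$ be $\Lambda$ with $v_{m}$ replaced by $v_{m}+\epsilon u$. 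Every $\Lambda'_{m}$ lies in the closed $D$-ball of radius $O(\epsilon)$ about $\Lambda$, and $\Lambda'_{m}\to\Lambda$ for $d$ and for every $d_{x}$; yet the unique admissible matching between $\Lambda'_{m}$ and $\Lambda'_{m'}$ ($m\neq m'$) must displace $v_{m}+\epsilon u$ and $v_{m'}$ by $\epsilon$, so $D(\Lambda'_{m},\Lambda'_{m'})\geq c\,\epsilon$: the family is uniformly $D$-separated and has no $D$-convergent subsequence. Hence no comparison with the $d$-topology can show that small $D$-balls are precompact. This is exactly the delicate point that the paper's own proof compresses into the assertion that $\{\Lambda'\mid\Lambda'\subset\Lambda+B(0,\epsilon)\}$ is ``obviously precompact'' for $\Delta$; your argument needs to confront it rather than inherit it, either by a genuinely different compactness mechanism or by restricting the claim (it does hold at $\emptyset$ and at finite $\Lambda$).
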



\vspace{0.5cm}

\subsection{Existence Theorems}
\label{S2.2}

The two following Theorems rely upon the continuity
of the density function $\|\ \cdot \|_1 \circ \nu$ on the 
space $(\uc\dc, D)$ (Theorem \ref{metricuniv} and
Theorem \ref{extract}).

\begin{theorem}
\label{maxsphere}
There exists an element 
$\Lambda \in \uc\dc$ such that
the following equality holds:
\begin{equation}
\label{maximu}
\delta(\bc(\Lambda)) ~=~ \delta_n.
\end{equation}
\end{theorem}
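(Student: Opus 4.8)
The plan is to prove this by the direct method, realizing $\delta_n$ not as an abstract supremum but as the density of a single explicitly assembled packing, exactly along the annular construction of Theorem~\ref{couronne}. Since $\delta_n=\sup_{\Lambda\in\uc\dc}\delta(\bc(\Lambda))$, any $\Lambda$ I produce automatically satisfies $\delta(\bc(\Lambda))\le\delta_n$, so the entire difficulty lies in manufacturing one $\Lambda$ whose density reaches $\delta_n$ \emph{as a genuine limit}; the continuity of the density functional $\|\cdot\|_1\circ\nu$ on $(\uc\dc,D)$ (Theorem~\ref{extract}), together with the completeness of this space (Theorem~\ref{metricuniv}, item (ii)), is what will let me identify the density of the assembled object with the limit of the densities of its building blocks.

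Concretely, I would first use the definition of the packing constant to pick, for each $i\ge 1$, a packing $\Lambda^{(i)}\in\uc\dc$ with $\delta(\bc(\Lambda^{(i)}))>\delta_n-1/i$. Because this density is a $\limsup$, for each $i$ there are arbitrarily large radii at which the local density of $\bc(\Lambda^{(i)})$ exceeds $\delta_n-1/i$; exploiting this, I would select a strictly increasing sequence $(\lambda_i)_{i\ge1}$, growing fast enough that $\vol B(0,\lambda_i)/\vol B(0,\lambda_{i+1})\to 0$, so that the local density of $\bc(\Lambda^{(i)})$ on the shell $\cc(\lambda_i+1/2,\lambda_{i+1}-1/2)$ lies within $1/i$ of $\delta_n$. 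I would then assemble $\Lambda$ exactly as in \eqref{limud}; the buffer annuli $\cc(\lambda_i-1/2,\lambda_i+1/2)$ of thickness $1$ keep the balls of consecutive shells from overlapping, so that $\Lambda\in\uc\dc$, and by translation invariance \eqref{egalaff} I am free to reposition each $\Lambda^{(i)}$ as convenient.

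It remains to evaluate $\delta(\bc(\Lambda))$. At radius $\lambda_{i+1}$ the volume of balls is at least that carried by the outermost shell, namely $(\delta_n-1/i)\,\vol B(0,\lambda_{i+1})$ minus the volume contained in the inner ball $B(0,\lambda_i)$ and the negligible buffer; dividing by $\vol B(0,\lambda_{i+1})$ and using that the inner fraction tends to $0$ by the fast growth of $(\lambda_i)$, the local density at $\lambda_{i+1}$ exceeds $\delta_n-o(1)$. Hence the $\limsup_{T\to\infty}$ of the local density is at least $\delta_n$, and combined with the trivial upper bound this gives $\delta(\bc(\Lambda))=\delta_n$; formally this is the conclusion \eqref{limdens} of Theorem~\ref{couronne}. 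The main obstacle I anticipate is precisely this calibration of the radii $(\lambda_i)$: one must force the asymptotic $\limsup$ up to $\delta_n$ rather than to the strictly smaller value that an insufficiently fast growth would yield, that is, turn the density into a true limit. This is exactly the mechanism encapsulated by Theorem~\ref{couronne} — where the hypothesis that consecutive packings are Cauchy for $\|\cdot\|_1$ makes each shell near-optimal throughout, so that even the growth $\lambda_{i+1}>2\lambda_i$ suffices — and here, working with merely near-optimal building blocks, the same effect is obtained by allowing the growth rate to increase with $i$, which the remark following Theorem~\ref{couronne} (replacing $2$ by any $a>1$) permits.
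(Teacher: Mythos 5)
Your construction is correct in substance, but it is a genuinely different route from the proof the paper actually gives in Section \ref{S7}. The paper argues by contradiction: it takes any sequence $(\Lambda_i)$ with $\delta(\bc(\Lambda_i))\to\delta_n$, extracts a subsequence convergent for the metric $D$ (Lemma \ref{subseq}, via compactness of the spaces $(\uc\dc,d_x)$ and diagonalization over $x\in\qb^n$), and then uses completeness of $(\uc\dc,D)$ together with the continuity --- in fact local constancy --- of the density function (Theorem \ref{extract}) to produce a limit packing of density $\delta_n$. You instead build the maximizer by hand, splicing near-optimal packings into annuli and computing the $\limsup$ directly; this is essentially Groemer's classical argument, which the paper cites as the alternative (``See Groemer and Section \ref{S7}''). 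Two points of care. First, you rightly do not invoke Theorem \ref{couronne} as a black box: a sequence with $\delta(\bc(\Lambda^{(i)}))\to\delta_n$ need \emph{not} be Cauchy for $\|\cdot\|_1$ (closeness of the norms $\|\chi_{\bc(\Lambda^{(i)})}\|_1$ says nothing about closeness of the functions), so conclusion \eqref{limdens} is unavailable and the shell-by-shell estimate must be done directly, as you do. Second, your calibration remark is the real crux: the fixed ratio $\lambda_{i+1}>2\lambda_i$ of Theorem \ref{couronne} only gives $\vol B(0,\lambda_i)/\vol B(0,\lambda_{i+1})<2^{-n}$, not $o(1)$, so without the Cauchy hypothesis the ratios must grow with $i$; your adaptive choice of $\lambda_{i+1}$ (large enough, and at a radius where the local density of $\bc(\Lambda^{(i)})$ exceeds $\delta_n-1/i$, which exists since the density is a $\limsup$) handles this, and the thickness-one buffers keep the spliced set in $\uc\dc$. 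What each approach buys: yours is elementary, explicit, and independent of the metric $D$ and of Theorems \ref{metricuniv} and \ref{extract}; the paper's abstract compactness-plus-continuity argument is designed to be reused verbatim for the completely saturated case (Theorem \ref{complsat}), where no comparably direct construction is apparent.
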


\begin{proof}
See Groemer \cite{groemer} and Section \ref{S7}.
\end{proof}



We will
say that
~$\Lambda \in \uc\dc$~
is {\em saturated},
or {\em maximal},
if it is
impossible to
add a replica of the ball $B$
(a ball of radius
$1/2$) to ~$\bc(\Lambda)$~ 
without destroying
the fact that it is a packing of balls,
i.e. without creating an overlap of balls.
The set ~$SS$~ of
systems of balls of radius ~$1/2$,
is partially ordered by the relation
~$\prec$~ defined by
~$\Lambda_{1}, \Lambda_{2} \in \uc\dc, ~~~~
\bc(\Lambda_{1}) \prec
\bc(\Lambda_{2}) 
~~\Longleftrightarrow~~
\Lambda_{1} \subset \Lambda_{2}$.
By Zorn's lemma, maximal packings of balls exist.
The saturation operation
of a packing of balls
consists in adding balls to obtain
a maximal packing of balls.
It is fairly arbitrary and 
may be finite or
infinite.
More generally
\cite{fejestothkuperbergkuperberg},
~$\bc(\Lambda)$~ is said to be
{\it ~$m$-saturated}
if no finite subsystem of
~$m-1$~ balls of it can be replaced with
~$m$~ replicas of the ball ~$B(0,r/2)$.
The notion of $m$-saturation was introduced
by Fejes-Toth, Kuperberg and Kuperberg
\cite{fejestothkuperbergkuperberg}.
Obviously,
~$1$-saturation means saturation, and
~$m$-saturation implies ~$(m-1)$-saturation.
It is not because a packing of balls is
saturated, or
~$m$-saturated, that
its density is
equal to ~$\delta_n$. 
The packing ~$\bc(\Lambda)$~ is 
{\it completely saturated}
if it is ~$m$-saturated for every ~$m \geq 1$. 
Complete saturation is a sharper version
of maximum density \cite{kuperberg}. 

\begin{theorem}
\label{complsat}
Every ball in ~$\rb^{n}$~ admits a completely saturated
packing with replicas of the ball, 
whose density is equal to the packing constant
~$\delta_n$. 
\end{theorem}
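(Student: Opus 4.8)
The plan is to prove existence of a completely saturated packing of density $\delta_n$ by a compactness-and-limit argument, using the machinery already assembled: the metric $D$ of Theorem \ref{metricuniv}, the continuity of the density function $\|\cdot\|_1 \circ \nu$ on $(\uc\dc, D)$, and the limit construction of Theorem \ref{couronne}. The key observation is that complete saturation is, roughly, a ``locally optimal'' condition that is stable under the right kind of limit, while maximal density is a global condition, and both are simultaneously attainable.

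First I would fix a sequence of truncations: for each radius $R_k \to +\infty$ and each $m$, consider packings that are $m$-saturated inside the ball $B(0,R_k)$ and have density as close to $\delta_n$ as possible; Theorem \ref{maxsphere} already supplies a packing of density exactly $\delta_n$ to start from, and one can attempt to locally re-saturate it in larger and larger annuli without destroying the asymptotic density. The idea is to build a sequence $(\Lambda_m)_{m \geq 1}$ in $\uc\dc$ whose members are increasingly saturated on increasingly large regions while keeping $\delta(\bc(\Lambda_m)) = \delta_n$, so that $(\chi_{\bc(\Lambda_m)})$ becomes Cauchy for $\|\cdot\|_1$. Here I would invoke Theorem \ref{couronne} to extract a subsequence $(\Lambda_{m_i})$ and radii $(\lambda_i)$ producing a limit set $\Lambda$ with $\delta(\bc(\Lambda)) = \lim_i \delta(\bc(\Lambda_{m_i})) = \delta_n$ via \eqref{limdens}. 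This secures the density half of the conclusion.

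Next I would verify that the limit $\Lambda$ is in fact $m$-saturated for every $m$. The pointwise pairing property (Theorem \ref{metricuniv} iii)) is the crucial tool: once $D(\Lambda, \Lambda_{m_i})$ is small, the points of $\Lambda$ and of $\Lambda_{m_i}$ are in canonical bijection at scale $\epsilon/2$, so any finite reconfiguration of $m-1$ balls of $\Lambda$ into $m$ balls that preserved the packing property would, transported back, violate the $m$-saturation of $\Lambda_{m_i}$ for $i$ large. Because saturation is a condition about a \emph{finite} subsystem of balls confined to a bounded region, and because the annular construction \eqref{limud} agrees with some $\Lambda_{m_i}$ on each fixed bounded region for $i$ large, the finite-range obstruction to adding or swapping balls is inherited by $\Lambda$. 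Thus $\Lambda$ is $m$-saturated for all $m$, i.e.\ completely saturated.

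The main obstacle I anticipate is reconciling the two limiting processes: the metric $D$ governs \emph{local} geometric convergence (and is what makes complete saturation pass to the limit), whereas $\|\cdot\|_1$ governs \emph{asymptotic} density, and the subsequence extracted in Theorem \ref{couronne} is chosen for the latter, not the former. The delicate point is to arrange the approximating sequence so that it is simultaneously Cauchy for $\|\cdot\|_1$ and convergent (at least on every bounded region) for $D$, so that Theorem \ref{couronne} and Theorem \ref{metricuniv} can be applied to \emph{the same} limit $\Lambda$. One must also check that the intermediate annular gaps $\cc(\lambda_i - 1/2, \lambda_i + 1/2)$ introduced by \eqref{limud}, which are free of density contribution, do not spoil $m$-saturation; this requires controlling that no admissible insertion of $m$ balls can straddle a gap, which is why the separation $\lambda_{i+1} > 2\lambda_i$ and the constant gap-thickness $1$ (twice the ball radius) are used. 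Handling this interface carefully is the heart of the proof; the density computation itself is then immediate from \eqref{limdens}.
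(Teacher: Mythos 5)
Your plan has a genuine and, as written, fatal gap: you use Theorem \ref{couronne} to produce the limit packing, but the set $\Lambda$ of \eqref{limud} is only a \emph{Marcinkiewicz representative} of $\lim_i \chi_{\bc(\Lambda_{m_i})}$, and complete saturation is not a class function on $\mc^{1}$ (any finite cluster is $\mc^{1}$-equivalent to the empty packing). Concretely, the $\Lambda$ of \eqref{limud} carries the empty annular gaps $\cc(\lambda_i - 1/2, \lambda_i + 1/2)$, and the paper itself points out, citing \cite{kuperbergkuperbergkuperberg}, that packings with such gap defects fail to be completely saturated, at least for $n=2$. So the object whose density you control via \eqref{limdens} is precisely the object you cannot expect to be saturated. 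You would need the $D$-limit instead, and you give no argument that your $\|\cdot\|_1$-Cauchy sequence is also $D$-convergent, nor that the two limits agree; you flag this reconciliation as ``the heart of the proof'' but do not supply it. A second, independent gap is that the approximating sequence itself is never constructed: the claims that one can ``locally re-saturate in larger and larger annuli'' at constant density $\delta_n$ and that the result is Cauchy for $\|\cdot\|_1$ are both unjustified, and the first is essentially the content of the Fejes-Toth--Kuperberg--Kuperberg theorem you are proving. Finally, your transport of $m$-saturation through the pointwise pairing is not airtight: if $m$ balls can be inserted into $\Lambda$ with all distances $\geq 1$, moving the ambient points by $< \epsilon/2$ to land in $\Lambda_{m_i}$ only yields distances $\geq 1 - \epsilon/2$, so the transported configuration need not be a packing without an extra strictness or compactness argument.

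For comparison, the paper avoids all of this by arguing by contradiction entirely inside $(\uc\dc, D)$ and never invoking Theorem \ref{couronne}: it takes the densest $\Lambda$ from Theorem \ref{maxsphere}, assumes the re-saturation process (removing $m_i$ balls, inserting $m_i + 1$) never terminates, extracts a $D$-convergent subsequence by the diagonalization of Lemma \ref{subseq}, and uses the pointwise pairing property of Theorem \ref{metricuniv} iii) together with the continuity and local constancy of the density (Theorem \ref{extract}) to force the subsequence to be stationary --- a contradiction. The lesson is that the $D$-topology, which sees local configurations, is the right one for saturation, while the $\mc^{1}$-topology only sees density; your proposal conflates the two.
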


\begin{proof}
Theorem 1.1 in
\cite{fejestothkuperbergkuperberg}.
See also Bowen 
\cite{bowen}. A direct proof is given in 
Section \ref{S7}, where we prove that
there always exists a completely saturated
sphere packing in the Marcinkiewicz class of
a densest sphere packing. 
\end{proof}

\vspace{0.5cm}

\section{Marcinkiewicz spaces and norms} 
\label{S3}

Let $p \in \rb^{+*}$.
The Marcinkiewicz ~$p$-th space
~$\mc^{p}$~ is
the quotient space of the
subspace
~$\{ f \in
\lc^{p}_{loc} ~|~ \|f\|_{p} < +\infty\}$~
of ~$\lc^{p}_{loc}$~
by the equivalence relation
$\rc$
which identifies ~$f$~ and ~$g$~ as soon as
~$\|f - g\|_{p} = 0$
(Marcinkiewicz \cite{marcinkiewicz},
Bertrandias \cite{bertrandias1},
Vo Khac \cite{vokhac}):
\begin{equation}
\label{defimarcin}
\mc^{p} := \{ \overline{f} ~|~ f \in
\lc^{p}_{loc}, \|f\|_{p} < +\infty\}.
\end{equation}
This equivalence relation
is called Marcinkiewicz
equivalence relation.
It is usual to introduce, with
$|f|_{p,t}$ given by 
\eqref{moyennep},
the two semi-norms
$$
\|f\|_{p} := \limsup_{t \to +\infty}
|f|_{p,t}
$$
and
$$
|\|f\||_{p} := \sup_{t > 0} |f|_{p,t}
$$
on $\lc^{p}_{loc}$. 
The vector 
space ~$\mc^{p}$~ is then normed
with 
~$\|\overline{f}\|_{p} =
\|f\|_{p}$. 

\begin{theorem}
\label{marcincompl}
The space $\mc^{p}$ is
complete.
\end{theorem}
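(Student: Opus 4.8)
The plan is to prove completeness directly, by the usual two-step reduction followed by an explicit annular construction of the limit, a functional-analytic version of the patching used in Theorem \ref{couronne}. First I would reduce to a rapidly Cauchy subsequence: given a Cauchy sequence $(\overline{f_m})_{m \geq 1}$ in $\mc^{p}$, extract a subsequence $(g_k)_{k \geq 1} := (f_{m_k})_{k \geq 1}$, with chosen representatives, such that $\|g_{k+1}-g_k\|_p \leq \epsilon_k$ where $\sum_{k \geq 1}\epsilon_k < +\infty$. Since a Cauchy sequence that admits a convergent subsequence is itself convergent, it suffices to exhibit an $\mc^{p}$-limit of $(\overline{g_k})_k$.

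The core step is to build one representative $g \in \lc^{p}_{loc}$ by gluing the $g_k$ on successively larger annuli. Because $\|g_{k+1}-g_k\|_p = \limsup_{t \to +\infty}|g_{k+1}-g_k|_{p,t} \leq \epsilon_k$, the very definition of the limsup lets me select a strictly increasing sequence of radii $(T_k)_{k \geq 1}$, $T_k \to +\infty$, such that $|g_{k+1}-g_k|_{p,t} \leq 2\epsilon_k$ for every $t \geq T_k$. I then set $g := g_1$ on $B(0,T_1)$ and $g := g_k$ on the annular region $\cc(T_k, T_{k+1})$ for each $k \geq 1$. This prescription is locally finite, so $g \in \lc^{p}_{loc}$, and on the outer region one has the pointwise telescoping identity
$$ g - g_k \;=\; \sum_{i \geq k} (g_{i+1}-g_i)\,\mathbf{1}_{\{\|x\| \geq T_{i+1}\}}, \qquad \|x\| \geq T_k. $$

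It remains to show $\|g - g_k\|_p \to 0$ as $k \to \infty$, which yields both $\overline{g} \in \mc^{p}$ (via $\|g\|_p \leq \|g_k\|_p + \|g-g_k\|_p$) and $\overline{g_k} \to \overline{g}$. Fix $k$ and a large radius $t$, and split the averaging ball $tB$ into the bounded inner ball $B(0,T_k)$ and the outer annuli it meets. The inner part carries a fixed $\lc^p$-function whose $p$-th integral is a constant $C_k$ independent of $t$, so its contribution $(C_k/\vol(tB))^{1/p}$ to $|g-g_k|_{p,t}$ vanishes as $t \to +\infty$. On the outer part I apply Minkowski's inequality for $|\cdot|_{p,t}$ (or, when $0 < p < 1$, the subadditivity $|f+h|_{p,t}^p \leq |f|_{p,t}^p + |h|_{p,t}^p$) to the telescoped sum above; each nonvanishing term has its support inside $tB$, forcing $T_{i+1} < t$ and hence $t > T_i$, so $|g_{i+1}-g_i|_{p,t} \leq 2\epsilon_i$. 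This gives $|g-g_k|_{p,t} \leq (C_k/\vol(tB))^{1/p} + 2\sum_{i \geq k}\epsilon_i$, and taking $\limsup_{t \to +\infty}$ yields $\|g-g_k\|_p \leq 2\sum_{i \geq k}\epsilon_i \to 0$.

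The main obstacle is precisely this last estimate: the Marcinkiewicz seminorm is a limsup of averages rather than a single integral, while $g$ is a patchwork, so one must control how the telescoped differences accumulate over all the annuli filling $tB$ at once, uniformly in $t$. The coordination of the radii $(T_k)_k$ with the summable tail $(\epsilon_k)_k$ is exactly what collapses the volume-weighted sum of the annular contributions to the convergent bound $2\sum_{i \geq k}\epsilon_i$ independently of $t$; once this is established, completeness of $\mc^{p}$ follows at once.
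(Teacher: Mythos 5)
Your proof is correct. The paper itself gives no argument for Theorem \ref{marcincompl} (it only cites Marcinkiewicz, Bertrandias and Vo Khac), but your route --- extracting a rapidly Cauchy subsequence, gluing the representatives on the annuli $\cc(T_k,T_{k+1})$, and controlling the telescoped differences by the volume-weighted averages $|\cdot|_{p,t}$ --- is exactly the classical argument of those references, and it is the same annular patching mechanism the paper itself carries out for characteristic functions of sphere packings in the proof of Theorem \ref{cauchy} in Section \ref{S4}.
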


\begin{proof}
Marcinkiewicz
\cite{marcinkiewicz},
\cite{bertrandias1},
\cite{vokhac}. 
\end{proof}

We call
~$\mc^{p}$-topology the topology
induced by this norm on 
~$\mc^{p}$~ or on 
~$\lc^{p}_{loc}$~ itself.
Both spaces will be endowed with
this topology. 

Following Bertrandias 
\cite{bertrandias1} we 
say that a function
$f \in \lc_{loc}^{p}$ is
$\mc^{p}$-regular if
$$\lim_{t \to +\infty}
\frac{1}{\mbox{vol(t B)}} \, 
\int_{t B \setminus
(t-l) B}
|f(x)|^{p} dx = 0
\qquad
\mbox{for all real number ~}
l.$$
Since all functions
$f \in \lc_{loc}^{p}$ such that
$\|f\|_{p} = 0$ are 
$\mc^{p}$-regular, we consider
classes of $\mc^{p}$-regular
functions of $\lc_{loc}^{p}$ 
modulo the
Marcinkiewicz equivalence relation.
We call 
$\mc_{r}^{p}$ the set 
of classes of Marcinkiewicz
equivalent $\mc^{p}$-regular functions.
\begin{proposition}
\label{mpregular}
The set
$\mc_{r}^{p}$ is a complete 
vector subspace of ~$\mc^{p}$.
\end{proposition}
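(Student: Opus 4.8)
The plan is to prove that $\mc_r^p$ is a \emph{closed} vector subspace of the complete space $\mc^p$ (Theorem \ref{marcincompl}); completeness then follows automatically, since a closed subset of a complete metric space is itself complete. Everything rests on one auxiliary quantity: for $f \in \lc^p_{loc}$, a real number $l$ and $t>0$, set
\[
A_{l,t}(f) := \left(\frac{1}{\vol(tB)}\int_{tB \setminus (t-l)B}|f(x)|^{p}\,dx\right)^{1/p},
\]
so that, by definition, $f$ is $\mc^{p}$-regular exactly when $\lim_{t\to+\infty}A_{l,t}(f)=0$ for every $l$. Two elementary facts drive the whole argument. First, since $tB\setminus(t-l)B \subset tB$, one has the domination $A_{l,t}(f)\le |f|_{p,t}$. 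Second, $f\mapsto A_{l,t}(f)$ is a seminorm for fixed $l,t$, being up to the normalising factor the $L^{p}$-norm of $f$ restricted to the annulus; in particular $A_{l,t}(f+g)\le A_{l,t}(f)+A_{l,t}(g)$ (Minkowski's inequality when $p\ge 1$, and subadditivity of $A_{l,t}(\cdot)^{p}$ when $0<p<1$, which is all that is used below).

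First I would verify that membership in $\mc_r^p$ is a genuine property of Marcinkiewicz classes and that these classes are stable under the vector operations. If $f$ is $\mc^{p}$-regular and $\|f-g\|_{p}=0$, then from $A_{l,t}(g)\le A_{l,t}(f)+A_{l,t}(g-f)\le A_{l,t}(f)+|g-f|_{p,t}$ and $\limsup_{t}|g-f|_{p,t}=\|g-f\|_{p}=0$ one obtains $\limsup_{t}A_{l,t}(g)\le \limsup_{t}A_{l,t}(f)=0$, so $g$ is regular as well: regularity is inherited by every representative of a class, which makes $\mc_r^p$ a well-defined subset of $\mc^p$. Closure under scalar multiplication is immediate from $A_{l,t}(cf)=|c|\,A_{l,t}(f)$, and closure under addition follows from $A_{l,t}(f+g)\le A_{l,t}(f)+A_{l,t}(g)\to 0$. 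Hence $\mc_r^p$ is a vector subspace of $\mc^{p}$.

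The core step is the closedness of $\mc_r^p$ in $\mc^{p}$. Let $(\overline{f_{k}})_{k}\subset \mc_r^p$ converge to $\overline{f}\in\mc^{p}$, that is $\|f-f_{k}\|_{p}\to 0$; I must show that $f$ is $\mc^{p}$-regular. Fix $l$. Combining the triangle inequality with the domination $A_{l,t}(f-f_{k})\le |f-f_{k}|_{p,t}$ gives, for every $k$, the bound $A_{l,t}(f)\le |f-f_{k}|_{p,t}+A_{l,t}(f_{k})$. Taking $\limsup_{t\to+\infty}$ and using that each $f_{k}$ is regular (so the last term tends to $0$) yields $\limsup_{t}A_{l,t}(f)\le \|f-f_{k}\|_{p}$. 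The left-hand side does not depend on $k$, so letting $k\to+\infty$ forces $\limsup_{t}A_{l,t}(f)=0$, hence $\lim_{t}A_{l,t}(f)=0$. As $l$ was arbitrary, $f$ is $\mc^{p}$-regular and $\overline{f}\in\mc_r^p$, proving closedness; completeness then follows from Theorem \ref{marcincompl}.

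The proof is essentially soft, and the one point deserving care is the exchange of the two limiting processes in the last step: the estimate $\limsup_{t}A_{l,t}(f)\le \|f-f_{k}\|_{p}$ must be established uniformly in $k$ \emph{before} letting $k\to+\infty$, and it is precisely the domination by $|f|_{p,t}$ that makes the $\limsup_{t}$ of the cross term collapse to the Marcinkiewicz distance $\|f-f_{k}\|_{p}$. I would also flag the minor subtlety that for $0<p<1$ the triangle inequalities for $|\cdot|_{p,t}$ and $A_{l,t}$ should be read as subadditivity of their $p$-th powers (using $|a+b|^{p}\le |a|^{p}+|b|^{p}$); this alters none of the limits above.
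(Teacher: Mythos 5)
Your proof is correct. The paper offers no argument for this proposition---its ``proof'' is just a citation to Bertrandias---so there is nothing to compare against line by line; your closed-subspace argument (the domination $A_{l,t}(f)\le |f|_{p,t}$, the seminorm triangle inequality, and the two-step passage $\limsup_{t}A_{l,t}(f)\le\|f-f_{k}\|_{p}\to 0$) is the standard and complete one, including the well-definedness of regularity on Marcinkiewicz classes and the caveat for $0<p<1$.
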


\begin{proof}
\cite{bertrandias1}.
\end{proof}

\vspace{0.5cm}

\section{Proof of Theorem \ref{couronne}} 
\label{S4}

Theorem \ref{couronne} is the $n$-dimensional
version of the remark of Marcinkiewicz
\cite{marcinkiewicz} in the case $p = 1$.
We prove a theorem 
slightly stronger than Theorem
\ref{couronne}
(Theorem \ref{cauchy}), 
by making the assumption 
in Lemma \ref{zerof}
and in Theorem \ref{cauchy} that $p$
is $\geq 1$ in full generality. 

\begin{lemma}
\label{zerof}
Let $p \geq 1$.
Let $(\lambda_{i})_{i \geq 1}$ 
be a
sequence of real numbers such that
$$\lambda_{i} \geq 1, ~~\lambda_{i+1}
> 2 \lambda_{i}, \qquad \qquad i \geq 1.$$
Let 
$\cc_i := 
\cc 
\bigl( 
\lambda_{i}+1/2,
\lambda_{i+1}-1/2
\bigr)$.
Then for all bounded function
$f \in \lc_{loc}^{p}$~ such that
$f_{|_{\cc_i}}    
\equiv 0$
for all $i \geq 1$,
we have
$$\|f\|_{p} = 0$$
\end{lemma}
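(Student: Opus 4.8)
The plan is to locate the support of $f$ precisely and then reduce the statement to a single volume estimate, which the exponential spacing $\lambda_{i+1} > 2\lambda_i$ renders negligible. Since $f$ vanishes on every thick annulus $\cc_i = \cc(\lambda_i+1/2,\lambda_{i+1}-1/2)$, its support meets a bounded region only inside the central ball $B(0,\lambda_1+1/2)$ and inside the thin shells
$$ A_i := \cc(\lambda_i - 1/2,\, \lambda_i + 1/2), \qquad i \geq 2, $$
each of radial thickness $1$, which fill the gaps between consecutive $\cc_{i-1}$ and $\cc_i$. Writing $M := \sup_{x}|f(x)| < +\infty$ and $\kappa_n := \vol(B(0,1))$, so that $\vol(tB) = \vol(B(0,t/2)) = \kappa_n (t/2)^n$, I would bound
$$ |f|_{p,t}^{p} = \frac{1}{\vol(tB)}\int_{tB} |f|^{p}\, dx \;\leq\; \frac{M^{p}}{\kappa_n (t/2)^n}\,\vol\!\bigl(\operatorname{supp}(f)\cap B(0,t/2)\bigr). $$
It then suffices to show the last volume is $o(t^n)$ as $t \to +\infty$, since this forces $|f|_{p,t}\to 0$ and hence $\|f\|_p = \limsup_{t\to+\infty}|f|_{p,t} = 0$.

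For the volume estimate, let $N = N(t)$ be the largest index with $\lambda_N - 1/2 \leq t/2$, so that only the shells $A_2,\dots,A_N$ meet $B(0,t/2)$ and $\lambda_N \leq t/2 + 1/2$. Applying the mean value theorem to $r\mapsto r^n$ and using $\lambda_i \geq 1$ (whence $\lambda_i + 1/2 \leq \tfrac{3}{2}\lambda_i$), I get
$$ \vol(A_i) = \kappa_n\bigl[(\lambda_i+1/2)^n - (\lambda_i-1/2)^n\bigr] \;\leq\; \kappa_n\, n\, (\tfrac{3}{2}\lambda_i)^{\,n-1}, $$
so each shell volume is $O(\lambda_i^{\,n-1})$. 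The spacing hypothesis gives both $\lambda_{N-k} < 2^{-k}\lambda_N$ and $\lambda_i \geq 2^{i-1}$, and this is exactly what makes the total negligible. For $n \geq 2$ the numbers $\lambda_i^{\,n-1}$ form a geometric progression dominated by the outermost term, so $\sum_{i=2}^{N}\lambda_i^{\,n-1} \leq \lambda_N^{\,n-1}/(1 - 2^{-(n-1)}) = O(t^{\,n-1})$; for $n=1$ each shell has the fixed volume $\kappa_1 = 2$, but there are only $N \leq 1 + \log_2(t/2+1/2) = O(\log t)$ of them. In either case
$$ \vol\!\bigl(\operatorname{supp}(f)\cap B(0,t/2)\bigr) \;\leq\; \vol\!\bigl(B(0,\lambda_1+1/2)\bigr) + \sum_{i=2}^{N}\vol(A_i) \;=\; o(t^n), $$
the central ball contributing only a constant. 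Feeding this into the displayed bound for $|f|_{p,t}^p$ finishes the proof.

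The only genuine obstacle is this summation: one must verify that the exponential gap $\lambda_{i+1} > 2\lambda_i$ precisely overpowers the polynomial growth $\lambda_i^{\,n-1}$ of the shell volumes, so that the support volume grows one order slower than $\vol(tB)\sim \kappa_n(t/2)^n$. I expect the mild dimensional split (convergent geometric series for $n\geq 2$ versus a logarithmic shell count for $n=1$) to be the single point needing care; the reduction to boundedness of $f$ and the mean value estimate for $\vol(A_i)$ are routine. Note that the proof uses the constant $2$ only through the convergence of $\sum_k 2^{-k(n-1)}$ and the bound $\lambda_i \geq 2^{i-1}$, which is why any spacing factor $a>1$ would work equally well, consistent with the remark following Theorem \ref{couronne}.
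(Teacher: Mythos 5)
Your proof is correct: the paper itself dismisses this lemma with the single word ``Immediate,'' and your argument (support confined to the central ball plus thin shells of thickness $1$, shell volumes $O(\lambda_i^{n-1})$ summing geometrically to $O(t^{n-1})$ for $n\geq 2$, or $O(\log t)$ shells of constant volume for $n=1$, hence $o(\vol(tB))$) is exactly the computation the authors are implicitly invoking, consistent with their remark that spheres in the intermediate regions ``do not contribute to the density.'' All the individual estimates (the mean value bound, $\lambda_i+1/2\leq\tfrac32\lambda_i$, $\lambda_{N-k}<2^{-k}\lambda_N$, $\lambda_N=O(t)$) check out, so this is a complete and correct substitute for the omitted proof.
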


\begin{proof}
Immediate.
\end{proof}

Lemma \ref{zerof} is the special case
of $\mc^{p}$-regularity applied to 
the characteristic functions
of systems of spheres which eventually
lie within the 
spherical intermediate regions
$\cc_i$. It proves that such spheres 
do not contribute
to the density anyway.

\begin{theorem}
\label{cauchy}
Let $p \geq 1$.
Let $(\Lambda_{m})_{m \geq 1}$ be
a sequence of ~$\uc\dc$-sets
such that
the sequence
$(\chi_{\bc(\Lambda_{m})})_{m \geq 1}$
is a Cauchy sequence
for the pseudo-metric 
$\|\cdot\|_p$
on
$\lc_{loc}^{p} \cap \lc^{\infty}$.
Then, 
there exist
\begin{itemize}
\item[(i)] a strictly
increasing sequence of
positive integers
$(m_{i})_{i \geq 1}$,
\item[(ii)]
a strictly increasing
sequence
of real numbers
$(\lambda_{i})_{i \geq 1}$~
with ~$\lambda_{i} \geq 1$~
and ~$\lambda_{i+1} > 2 \lambda_{i}$,
\end{itemize}
such that, with
\begin{equation}
\label{limud}
\Lambda =
\bigcup_{i \geq 1} \,
\Lambda_{m_{i}} \cap
\, \cc(\lambda_{i}+1/2,
\lambda_{i+1}-1/2),
\end{equation}
the two functions
\begin{equation}
\label{limudchi}
\chi_{\bc(\Lambda)} \qquad 
\mbox{and} \qquad
\lim_{i \to +\infty} \, \chi_{\bc(\Lambda_{m_i})}
\end{equation}
are $\mc^{p}$-equivalent.
\end{theorem}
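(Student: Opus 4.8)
```latex
The plan is to extract the subsequence and the radii simultaneously by a
diagonal-type argument driven by the Cauchy hypothesis, and then to invoke
Lemma~\ref{zerof} to kill the contributions coming from the intermediate
annular shells. First I would use that $(\chi_{\bc(\Lambda_m)})_{m \geq 1}$ is
Cauchy for $\|\cdot\|_p$: given any $\epsilon_i = 2^{-i}$ I can find an index
$m_i$ (strictly increasing, taking care to keep them increasing as I go) such
that $\|\chi_{\bc(\Lambda_{m})} - \chi_{\bc(\Lambda_{m'})}\|_p < \epsilon_i$ for
all $m, m' \geq m_i$. The point of the $\limsup$ in \eqref{normep2b} is that
$\|g\|_p < \epsilon_i$ means $|g|_{p,t} < \epsilon_i$ only for $t$ large; so
alongside each $m_i$ I would record a radius threshold $\lambda_{i+1}$ beyond
which the averaged $p$-mean of the relevant difference is genuinely below
$\epsilon_i$. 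I would choose these $\lambda_i$ to be strictly increasing with
$\lambda_i \geq 1$ and $\lambda_{i+1} > 2\lambda_i$, which is always possible by
enlarging them, since the constraint is only a lower bound.

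Next I would analyze the glued set $\Lambda$ of \eqref{limud}. On each annulus
$\cc_i = \cc(\lambda_i + 1/2, \lambda_{i+1} - 1/2)$ the packing $\bc(\Lambda)$
agrees \emph{exactly} with $\bc(\Lambda_{m_i})$, because the balls of radius
$1/2$ centered at points of $\Lambda_{m_i} \cap \cc_i$ are contained in the
slightly larger shell $\cc(\lambda_i, \lambda_{i+1})$ and, crucially, the $1/2$
offsets in the definition of $\cc_i$ guarantee no ball straddles the cut
surfaces $\|x\| = \lambda_i$ or $\|x\| = \lambda_{i+1}$; thus
$\chi_{\bc(\Lambda)}$ and $\chi_{\bc(\Lambda_{m_i})}$ coincide on the open shell
between consecutive cut spheres. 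Writing $g$ for the difference
$\chi_{\bc(\Lambda)} - \lim_{i} \chi_{\bc(\Lambda_{m_i})}$, I would show $g$ is
supported, up to the measure-zero cut spheres, in the thin transition regions
$\cc(\lambda_i - 1/2, \lambda_i + 1/2)$, each of constant thickness $1$. This is
precisely the hypothesis of Lemma~\ref{zerof}: $g$ is bounded (it is a
difference of characteristic functions) and vanishes on every $\cc_i$, so
$\|g\|_p = 0$, which is the asserted $\mc^p$-equivalence in \eqref{limudchi}.

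The one genuine subtlety, and the step I expect to require the most care, is
making sense of $\lim_{i \to +\infty} \chi_{\bc(\Lambda_{m_i})}$ and confirming
it is legitimately the $\mc^p$-limit of the Cauchy subsequence. Since $\mc^p$
is complete (Theorem~\ref{marcincompl}), the Cauchy subsequence does converge
to some class in $\mc^p$, and I would identify $\lim_i \chi_{\bc(\Lambda_{m_i})}$
with a representative of that limit class; the content of \eqref{limudchi} is
then that the concrete geometric gluing $\Lambda$ realizes this abstract limit.
The matching between the two therefore rests on coordinating the speed at which
$\|\chi_{\bc(\Lambda_{m_i})} - \chi_{\bc(\Lambda_{m_{i+1}})}\|_p \to 0$ with the
growth of the $\lambda_i$, so that the errors accumulated on the outer annuli
are controlled by the averaging denominator $\vol(tB)$ in \eqref{moyennep}. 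The
exponential spacing $\lambda_{i+1} > 2\lambda_i$ is exactly what forces the
outermost annulus to dominate the volume of $tB$, so that the $p$-mean over
$\cc_i$ is governed by the single difference at level $m_i$ rather than by the
whole tail; verifying this domination quantitatively is the crux, after which
$\|g\|_p = 0$ follows from Lemma~\ref{zerof} as above.
```
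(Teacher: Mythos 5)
Your overall plan --- extract a rapidly convergent subsequence, record for each $i$ a radius beyond which the $p$-means of the consecutive differences are uniformly small (this is exactly the paper's quantity $R_{\lambda_i}(f)=\sup_{t\geq\lambda_i+1/2}|f|_{p,t}$), glue on exponentially spaced annuli, and use Lemma~\ref{zerof} to discard the transition shells --- is the paper's strategy. But your second paragraph contains a genuine error, and your third paragraph names the real difficulty without resolving it. The object $\lim_{i}\chi_{\bc(\Lambda_{m_i})}$ is only an $\mc^{p}$-class; no representative of it has any reason to agree pointwise with $\chi_{\bc(\Lambda_{m_i})}$ on $\cc_i$, so the difference $g=\chi_{\bc(\Lambda)}-\lim_i\chi_{\bc(\Lambda_{m_i})}$ is \emph{not} supported in the transition shells, it does \emph{not} vanish on the $\cc_i$, and Lemma~\ref{zerof} cannot be applied to it. The statement that actually has to be proved is the quantitative one, $\|\chi_{\bc(\Lambda)}-\chi_{\bc(\Lambda_{m_i})}\|_p\to 0$, and this is precisely the step you label ``the crux'' and leave unverified.

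In the paper that step is carried out as follows: one introduces the glued function $H$ (equal to $\chi_{\bc(\Lambda_{m_\nu})}$ on $\cc_\nu$ and to $0$ off $J=\bigcup_\nu\cc_\nu$), fixes $i$ and $t$ with $\lambda_k+1/2\leq 2t\leq\lambda_{k+1}-1/2$, and splits $\int_{tB}|H-\chi_{\bc(\Lambda_{m_i})}|^p$ into the part outside $J$ --- this, not your $g$, is where Lemma~\ref{zerof} is used, to kill the mass of $\bc(\Lambda_{m_i})$ sitting in the transition shells --- and a sum over the annuli $\cc_1,\dots,\cc_k$. The inner annuli $\nu\leq i$ contribute at most $i\,\vol((\lambda_i+1/2)B)$, which is $o(\vol(tB))$ as $t\to\infty$ for fixed $i$; this is needed because there $H$ copies $\Lambda_{m_\nu}$ with $\nu<i$, sets that are \emph{not} close to $\Lambda_{m_i}$. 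The outer annuli $i<\nu\leq k$ are handled by telescoping $\chi_{\bc(\Lambda_{m_\nu})}-\chi_{\bc(\Lambda_{m_i})}$ through consecutive differences, bounding each piece via $R_{\lambda_\omega}(\cdots)\leq 2^{-\omega}$, and summing the resulting geometric series of volumes --- this is the one place where $\lambda_{\nu+1}>2\lambda_\nu$ is used, producing the convergent factor $\sum_j 2^{-jn}$ and the bound $\|H-\chi_{\bc(\Lambda_{m_i})}\|_p\leq c\,2^{-(i-1)}$. Your heuristic that the outermost annulus dominates $\vol(tB)$ points in the right direction, but without this (or an equivalent) computation the proposal asserts the conclusion rather than proving it; as written, the appeal to Lemma~\ref{zerof} in your second paragraph does not close the argument.
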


\begin{proof} 

Since
the sequence
$(\chi_{\bc(\Lambda_{m})})$ 
is a Cauchy sequence, 
let us chose a subsequence
of $\uc\dc$-sets 
$(\Lambda_{m_{i}})_{i \geq 1}$
which satisfies
$$ \|\chi_{\bc(\Lambda_{m_{i}})} 
- \chi_{\bc(\Lambda_{m_{i+1}})}\|_{p} 
\leq 2^{-(i+1)}\, .$$ 
Then, denoting
$$R_{\lambda}(f) := 
\sup_{\lambda +1/2 \leq t < +\infty}
\left(
\frac{1}{\mbox{vol}(t B)}
\int_{t B} | f(x) |^{p} dx
\right)^{1/p},
\qquad f \in \lc_{loc}^{p},$$
let us chose
a sequence of real numbers
$(\lambda_{i})_{i \geq 1}$
for which $\lambda_{i} \geq 1,
\lambda_{i+1} > 2 \lambda_{i}$, and
such that
$$R_{\lambda_{i}}(\chi_{\bc(\Lambda_{m_{i}})}
-
\chi_{\bc(\Lambda_{m_{i+1}})}) 
\leq 2^{-i} .$$
Let us define the function
$$H(x) :=
\left\{
\begin{array}{lll}
\chi_{\bc(\Lambda_{m_{i}})}(x) 
& \mbox{if}
& \lambda_{i} + 1/2 \leq \|x\| \leq \lambda_{i+1} - 1/2
~~~~(i=1, 2, \dots),\\
0 & \mbox{if} & 
\lambda_{i} - 1/2 < \|x\| < \lambda_{i} + 1/2
~~~~(i=1, 2, \dots),\\
0 & \mbox{if} &
\|x\| \leq \lambda_{1} - 1/2.
\end{array}
\right.
$$
The function $H(x)$ 
is exactly the characteristic function
of $\bc(\Lambda)$ on
$J := \bigcup_{j=1}^{+\infty}
\, \cc_{j}$ the portion of space
occupied by the closed annuli $\cc_j$. 
Let us prove that the function $H(x)$ satisfies:
\begin{equation}
\label{Hlimite}
\lim_{i \to +\infty} \| H -
\chi_{\bc(\Lambda_{m_{i}})}\|_{p} = 0.
\end{equation} 
Let us fix $i$ and take $t$ and $k$
such that 
\begin{equation}
\label{tki}
\lambda_{k} + 1/2
\leq 2 t \leq \lambda_{k+1}-1/2
\end{equation}
holds with $k \geq i + 1$.
Then
$$\int_{t B} |H(x) - 
\chi_{\bc(\Lambda_{m_{i}})}(x)|^{p} dx
~=~ \int_{t B \cap J}
|H(x) - \chi_{\bc(\Lambda_{m_{i}})}(x)|^{p} dx$$
$$
+ \int_{t B \cap (\rb^{n} \setminus
J)}
\chi_{\bc(\Lambda_{m_{i}})}(x)^{p} dx.
$$
By Lemma \ref{zerof},
$$\|\chi_{\bc(\Lambda_{m_{i}})} \cap
\chi_{\rb^{n} \setminus
J}\|_{p} ~~=~~0.$$
Hence, we have just to consider 
the portion of space occupied by the spheres 
$\bc(\Lambda_{m_i})$ in $t B \cap J$. 
We have
$$\int_{tB \cap J} |H(x) - 
\chi_{\bc(\Lambda_{m_{i}})}(x)|^{p} dx
~=~ \sum_{\nu = 1}^{i}
\int_{t B \cap \cc_{\nu}} 
|H(x) - \chi_{\bc(\Lambda_{m_{i}})}(x)|^{p} dx$$
$$  +
\sum_{\nu = i+1}^{k-1}
\int_{t B \cap \cc_{\nu}}
|H(x) - \chi_{\bc(\Lambda_{m_{i}})}(x)|^{p} dx
+\int_{tB \cap \cc_{k}}
|H(x) - \chi_{\bc(\Lambda_{m_{i}})}(x)|^{p} dx
$$
$$
= A + E + C.
$$
Let us now transform the sum ~$A$:
$$\sum_{\nu = 1}^{i}
\int_{tB \cap \cc_{\nu}} 
|H(x) - \chi_{\bc(\Lambda_{m_{i}})}(x)|^{p} dx
=
\sum_{\nu = 1}^{i}
\int_{tB \cap \cc_{\nu}}
|\chi_{\bc(\Lambda_{m_{\nu}})}(x) -
\chi_{\bc(\Lambda_{m_{i}})}(x)|^{p} dx.
$$ 
But,
for all ~$\nu \in \{1, 2, \dots, i-1\}$,
$$\left(
\int_{tB \cap \cc_{\nu}}
|\chi_{\bc(\Lambda_{m_{\nu}})}(x) -
\chi_{\bc(\Lambda_{m_{i}})}(x)|^{p} dx
\right)^{1/p}$$
$$
\leq
\sum_{\omega = \nu}^{i-1}
\left(
\int_{tB \cap \cc_{\nu}}
|\chi_{\bc(\Lambda_{m_{\omega}})}(x)
-
\chi_{\bc(\Lambda_{m_{\omega+1}})}(x)|^{p}
dx
\right)^{1/p}
$$
$$
\leq \sum_{\omega = \nu}^{i-1}
\left(
\mbox{vol}((\lambda_{\omega}+1/2)B)\right)^{1/p}
\,
R_{\lambda_{\omega}}(\chi_{\bc(\Lambda_{m_{\omega}})}
-
\chi_{\bc(\Lambda_{m_{\omega+1}})})
$$
$$
\leq \sum_{\omega = \nu}^{i-1}
\left(
\mbox{vol}((\lambda_{i}+1/2)B)\right)^{1/p}
\, 2^{-\omega} \leq 
\left(\mbox{vol}((\lambda_{i}+1/2)B)\right)^{1/p}.
$$
Hence
\begin{equation}
\label{A}
A \leq i \, \mbox{vol}((\lambda_{i}+1/2)B).
\end{equation}
Let us transform the sum E:
$$\sum_{\nu = i+1}^{k-1}
\int_{t B \cap \cc_{\nu}}
|H(x) - \chi_{\bc(\Lambda_{m_{i}})}(x)|^{p} dx 
=
\sum_{\nu = i+1}^{k-1}
\int_{t B \cap \cc_{\nu}}
|\chi_{\bc(\Lambda_{m_{\nu}})}(x)
-
\chi_{\bc(\Lambda_{m_{i}})}(x)|^{p} dx.$$
But, for
all ~$\nu \in \{i+1, i+2, \dots, k-1\}$,
$$
\left(
\int_{t B \cap \cc_{\nu}}
|\chi_{\bc(\Lambda_{m_{\nu}})}(x)
-
\chi_{\bc(\Lambda_{m_{i}})}(x)|^{p} dx
\right)^{1/p}
$$
$$\leq
\sum_{\omega=i}^{\nu-1}
\left(
\int_{t B \cap \cc_{\nu}}
|\chi_{\bc(\Lambda_{m_{\omega}})}(x)
-
\chi_{\bc(\Lambda_{m_{\omega + 1}})}(x)|^{p} dx
\right)^{1/p}
$$
$$
\leq
\sum_{\omega=i}^{\nu-1}
\left(
\mbox{vol}((\lambda_{\omega} + 1/2) B)
\right)^{1/p}
\,
R_{\lambda_{\omega}}(
\chi_{\bc(\Lambda_{m_{\omega}})}
-
\chi_{\bc(\Lambda_{m_{\omega + 1}})})
$$
$$\leq
\sum_{\omega=i}^{\nu-1}
\left(
\mbox{vol}((\lambda_{\omega} + 1/2) B)
\right)^{1/p}
\, 2^{-\omega}
\leq
\mbox{vol}((\lambda_{\nu} + 1/2) B)^{1/p}
\,
2^{-i+1}.$$
Hence,
$$E \leq 
2^{-(i-1)p} \left(
\mbox{vol}((\lambda_{i+1} + 1/2) B)
+
\mbox{vol}((\lambda_{i+2} + 1/2) B)
+
\dots \hspace{1cm} \mbox{}\right.$$ 
$$
\left.
\mbox{} \hspace{8cm}
+
\mbox{vol}((\lambda_{k-1} + 1/2) B)
\right)
$$
$$
\leq 
2^{-(i-1)p} 
\, \mbox{vol}((\lambda_{k} + 1/2) B)
\left(
\frac{1}{2^{n}} +
\frac{1}{2^{2 n}} +
\dots 
\right) 
$$
\begin{equation}
\label{E}
\leq 2^{-(i-1)p}
\, \mbox{vol}(2 t B) 
= 2^{-(i-1)p +n}
\, \mbox{vol}(t B).
\end{equation}
Let us transform the sum C:
\begin{equation}
\label{C}
C \leq
2^{-(i-1)p +n}
\, \mbox{vol}(t B).
\end{equation}
From
\eqref{A}, 
\eqref{E} and \eqref{C} we deduce
$$
\left(
\frac{1}{\mbox{vol}(tB)} \,
\int_{t B \cap J} |H(x) - 
\chi_{\bc(\Lambda_{m_{i}})}(x)|^{p} dx
\right)^{1/p}
$$
$$\leq \left[
\frac{i \, \mbox{vol}((\lambda_{i}+1/2)B)}
{\mbox{vol}(t B)}
+ 2^{-(i-1)p + n + 1}
\right]^{1/p}. 
$$
Using \eqref{tki} we deduce, for a certain constant $c > 0$,
$$\| H -
\chi_{\bc(\Lambda_{m_{i}})}\|_{p}
\leq 
c \, 2^{-(i-1)}.$$
This implies \eqref{Hlimite}.
Now, if $m_{i} \leq q < m_{i+1}$, 
$i \geq 1$,
we have
$$\| H -
\chi_{\bc(\Lambda_{q})}\|_{p}
\leq 
\| \chi_{\bc(\Lambda_{m_{i}})}
-
\chi_{\bc(\Lambda_{q})}\|_{p}
+
\| H -
 \chi_{\bc(\Lambda_{m_{i}})}\|_{p}
= o(1) + o(1) = o(1)$$
when $i$ tends to $+\infty$.
The proof of the 
$\mc^{p}$-equivalence 
\eqref{limudchi} between
$H$ and $\lim_{i \to +\infty} 
\chi_{\bc(\Lambda_i)}$
is now complete. 

The thickness of the empty
annular intermediate regions
$\cc_{i}$
is equal to 1: it 
ensures that the limit 
point set $\Lambda$
is uniformly discrete 
of constant $1$.
\end{proof}

\vspace{0.5cm}

\section{Proof of Proposition \ref{UDstructure}}
\label{S5}

The metric $d$ on $\uc\dc$ was constructed
in
\cite{murazvergergaugry2}, \S 3.2.1,
as a kind of 
counting system  
normalized by a suitable
distance function. 
In order to make explicit 
the statement iii) of Proposition 
\ref{UDstructure}, we
recall the construction of $d$, adding the
ingredient \eqref{sondeii} in 
order to obtain the claim. The metric $d$ 
is given in Lemma \ref{l2.2}.

For all $\Lambda \in \uc\dc$, 
we denote by $\Lambda_{i}$ its $i$-th element.
Let 
$$\ec = \{ (D,E) \mid D~ \mbox{countable point set in}
~\rb^{n}, E~ \mbox{countable point set in}
~(0 , 1/2)\}$$
and
~$f : \rb^{n} \to [0 , 1 ]$ a continuous function
with compact support in $B(0, 1)$
which satisfies:
\begin{equation}
\label{sondei}
f(0) = 1,
\end{equation}
\begin{equation}
\label{sondeii}
f(\rho(t)) =
f(t) \qquad \mbox{for all}~ t \in \rb^{n}~ \mbox{and
all}~ \rho \in O(n, \rb),
\end{equation}
\begin{equation}
\label{sondeiii}
f(t) \leq \frac{1/2 + \| \lambda - t/2 \|}
{1/2+\| \lambda \|} \qquad \mbox{for all} ~t \in B(0,1)~ 
\mbox{and} 
~\lambda \in \rb^{n}.
\end{equation}

It is 
remarkable that
the topology of $(\uc\dc, d)$
does not depend upon $f$ once 
\eqref{sondei} and \eqref{sondeiii} 
are simultaneously satisfied
(\cite{murazvergergaugry2} 
Proposition 3.5 and \S 3.3).
Therefore adding \eqref{sondeii} 
does not change
the topology of $(\uc\dc, d)$ 
but only the invariance
properties of the metric $d$.

For $f$ for
instance, let us take
~$f(t) = 1 - 2 \|t\|$~ for
~$t \in B(0,1/2)$~ and
~$f(t)= 0$~ elsewhere.

With each element $(D,E) \in \ec$ 
and origin
$\alpha$ of  
~$\rb^{n}$~ we 
associate a real-valued function 
~$d_{\alpha,(D,E)}$~ on 
~$\uc\dc \times \uc\dc$~
in the following way 
(denoting by
~$\stob(c, v)$~ 
the interior of the closed
ball ~$B(c,v)$~ of centre ~$c$~ and
radius ~$v > 0$).
Let $\bc_{(D,E)} = 
\{ \bc_{m} \}$ denote 
the countable set of all possible finite 
collections 
$$\bc_{m} 
=\bigl\{ \stob(c_{1}^{(m)}, 
\epsilon_{1}^{(m)}), 
\stob(c_{2}^{(m)}, 
\epsilon_{2}^{(m)}),
\ldots, 
\stob(c_{i_{m}}^{(m)}, 
\epsilon_{i_{m}}^{(m)}) 
\bigr\}$$
of open balls
such that ~$c_{q}^{(m)} \in D$ and 
~$\epsilon_{q}^{(m)} \in
E$~  
for all ~$q \in \{1, 2, \ldots, i_{m}\}$, 
and such that 
for all $m$ and any two distinct  
balls in $\bc_{m}^{(r)}$ of 
respective centers $c_{q}^{(m)}$ and 
~$c_{k}^{(m)}$, we have
$$\| c_{q}^{(m)} - c_{k}^{(m)} \| \geq 1.$$ 
Then we
define the following function, 
with ~$\Lambda, \Lambda' \in \uc\dc$,\\ 

$
d_{\alpha,(D,E)}(\Lambda,\Lambda') :=
$
\begin{equation}
\label{ad1}
\sup_{ 
\bc_{m} \in \bc_{(D,E)}} 
\frac{\left| \phi_{\bc_{m}}(\Lambda)- 
\phi_{\bc_{m}}(\Lambda') \right|}
{(1/2 + \|\alpha\|
+\|\alpha - c_{1}^{(m)} \| + \|\alpha - c_{2}^{(m)}\| + \dots +
\|\alpha - c_{i_{m}}^{(m)}\|)}
\end{equation}
where the function
~$\phi_{\bc_{m}}$~ is given by
$$\phi_{\bc_{m}}(\Lambda) := 
\sum_{\stob(c,\epsilon) 
\in \bc_{m}} 
\sum_{i} \epsilon f\left(\frac{\Lambda_{i}-c}{\epsilon}\right),$$ 
putting 
$\phi_{\bc_{m}}(\emptyset) = 0$~ for all
~$\bc_{m} \in \bc_{(D,E)}$~ and
all ~$(D,E) \in \ec$ by convention.

\begin{lemma}
\label{l2.2}
For all $(\alpha,(D,E))$ in $\rb^{n} \times
\ec$, $d_{\alpha,(D,E)}$ 
is a pseudo-metric
on $\uc\dc$.
The supremum
$\displaystyle
d := \sup_{
\stackrel{\alpha \in \rb^{n}}{(D,E) \in \ec}}
d_{\alpha,(D,E)}$ is a metric on $\uc\dc$,
valued in $[0, 1]$.
\end{lemma}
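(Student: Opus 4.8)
The plan is to treat the two assertions of Lemma~\ref{l2.2} in turn: first that each $d_{\alpha,(D,E)}$ is a pseudo-metric on $\uc\dc$, and then that the supremum $d$ over all $(\alpha,(D,E))\in\rb^{n}\times\ec$ is a genuine metric taking values in $[0,1]$. Throughout, the decisive structural facts are that every $\Lambda\in\uc\dc$ is uniformly discrete of constant $1$, that the radii $\epsilon$ lie in $(0,1/2)$, and that the centres of any admissible collection $\bc_{m}$ are pairwise at distance $\geq 1$.

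For the pseudo-metric properties of a fixed $d_{\alpha,(D,E)}$, non-negativity and symmetry are immediate from the appearance of $|\phi_{\bc_m}(\Lambda)-\phi_{\bc_m}(\Lambda')|$ in the numerator, and $d_{\alpha,(D,E)}(\Lambda,\Lambda)=0$ because each numerator vanishes. The triangle inequality is the only point needing a word: for each fixed $\bc_m$ the denominator in \eqref{ad1} depends only on $\alpha$ and the centres $c_q^{(m)}$ and not on the point sets, so dividing the numerator inequality $|\phi_{\bc_m}(\Lambda)-\phi_{\bc_m}(\Lambda'')|\le |\phi_{\bc_m}(\Lambda)-\phi_{\bc_m}(\Lambda')|+|\phi_{\bc_m}(\Lambda')-\phi_{\bc_m}(\Lambda'')|$ by that common denominator bounds the $\bc_m$-ratio for $(\Lambda,\Lambda'')$ by the sum of those for $(\Lambda,\Lambda')$ and $(\Lambda',\Lambda'')$; taking the supremum over $\bc_m$ then gives $d_{\alpha,(D,E)}(\Lambda,\Lambda'')\le d_{\alpha,(D,E)}(\Lambda,\Lambda')+d_{\alpha,(D,E)}(\Lambda',\Lambda'')$.

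The heart of the matter is the uniform bound $d_{\alpha,(D,E)}\le 1$, which simultaneously guarantees finiteness. Here I would first observe that for any single ball $\stob(c,\epsilon)\in\bc_m$ the inner sum $\sum_i \epsilon f((\Lambda_i-c)/\epsilon)$ has at most one non-zero term: since $f$ is supported in $B(0,1)$ a contribution requires $\|\Lambda_i-c\|\le\epsilon<1/2$, and two such points of $\Lambda$ would be less than $1$ apart, contradicting uniform discreteness. As $0\le f\le 1$ and $\epsilon<1/2$, each ball therefore contributes less than $1/2$ to $\phi_{\bc_m}(\Lambda)$. Next, because the centres are pairwise at distance $\geq 1$, at most one centre $c_{q_0}^{(m)}$ can satisfy $\|\alpha-c_{q_0}^{(m)}\|<1/2$; its ball is absorbed by the summand $1/2$ of the denominator, while every other ball has contribution $<1/2\le\|\alpha-c_q^{(m)}\|$ and is absorbed by the matching term $\|\alpha-c_q^{(m)}\|$. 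Summing gives $\phi_{\bc_m}(\Lambda)\le 1/2+\|\alpha\|+\sum_q\|\alpha-c_q^{(m)}\|$, and the same for $\Lambda'$. Since $\phi_{\bc_m}\ge 0$, one has $|\phi_{\bc_m}(\Lambda)-\phi_{\bc_m}(\Lambda')|\le\max(\phi_{\bc_m}(\Lambda),\phi_{\bc_m}(\Lambda'))$, which is dominated by the denominator; hence every ratio, and therefore $d_{\alpha,(D,E)}(\Lambda,\Lambda')$, is at most $1$. (Alternatively, the quantitative inequality \eqref{sondeiii} applied with the free parameter $(\alpha-c)/(2\epsilon)$ furnishes the same per-ball estimate; the combinatorial dispatch of the at-most-one nearby centre is in any case the step I expect to be the main obstacle.)

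It remains to pass to the supremum and to upgrade to a metric. Since each $d_{\alpha,(D,E)}$ is a pseudo-metric bounded by $1$, the family is uniformly bounded, so $d$ is finite and valued in $[0,1]$; symmetry and vanishing on the diagonal are inherited termwise, and the triangle inequality follows by bounding each $d_{\alpha,(D,E)}(\Lambda,\Lambda'')$ by $d(\Lambda,\Lambda')+d(\Lambda',\Lambda'')$ before taking the supremum on the left. Thus $d$ is a pseudo-metric valued in $[0,1]$. For separation, suppose $\Lambda\neq\Lambda'$ and pick, without loss of generality, a point $\lambda\in\Lambda\setminus\Lambda'$; set $\delta:=\dist(\lambda,\Lambda')$, which is positive because $\Lambda'$ is uniformly discrete and $\lambda\notin\Lambda'$. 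Choosing $D\ni\lambda$, a radius $\epsilon\in E$ with $\epsilon<\min(\delta,1/2)$, and the singleton collection $\bc_m=\{\stob(\lambda,\epsilon)\}$, one gets $\phi_{\bc_m}(\Lambda)=\epsilon f(0)=\epsilon$ by \eqref{sondei}, while $\phi_{\bc_m}(\Lambda')=0$; hence $d_{\alpha,(D,E)}(\Lambda,\Lambda')\ge \epsilon/(1/2+\|\alpha\|+\|\alpha-\lambda\|)>0$ and a fortiori $d(\Lambda,\Lambda')>0$. This yields $d(\Lambda,\Lambda')=0\Rightarrow\Lambda=\Lambda'$ and completes the proof that $d$ is a metric.
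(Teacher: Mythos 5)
Your proof is correct and essentially self-contained, whereas the paper itself delegates this lemma entirely to the reference \cite{murazvergergaugry2}; the argument you give is the one the construction of \eqref{ad1} is designed for. The three points that actually need care --- at most one point of $\Lambda$ meets each ball $\stob(c,\epsilon)$ because $2\epsilon<1$ and $\Lambda$ is uniformly discrete of constant $1$; at most one centre of $\bc_m$ lies within $1/2$ of $\alpha$ because the centres are pairwise at distance $\geq 1$, which yields $\phi_{\bc_m}\leq 1/2+\|\alpha\|+\sum_q\|\alpha-c_q^{(m)}\|$ and hence the bound by $1$; and the singleton collection $\{\stob(\lambda,\epsilon)\}$ with $\epsilon<\min(\dist(\lambda,\Lambda'),1/2)$ for separation --- are all handled correctly.
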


\begin{proof}
See Muraz and Verger-Gaugry \cite{murazvergergaugry2}.
\end{proof}

Let us show that ~$d$~ is invariant
by the action of the orthogonal group
$O(n, \rb)$.

\begin{lemma}
\label{rotation}
For all ~$(D, E) \in \ec, \alpha \in \rb^{n},
\rho \in O(n, \rb)$~ 
and 
~$\Lambda, \Lambda' \in \uc\dc$, the
following equality holds:
$$d_{\alpha, (D,E)}
(\Lambda, \Lambda')=
d_{\rho(\alpha), (\rho(D), E)}
(\rho(\Lambda), \rho(\Lambda')).$$
\end{lemma}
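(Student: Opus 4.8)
The plan is to show that the orthogonal map $\rho$ sets up a bijection of the indexing families $\bc_{(D,E)} \to \bc_{(\rho(D),E)}$ under which every quotient appearing in the supremum defining $d_{\alpha,(D,E)}(\Lambda,\Lambda')$ is carried to the corresponding quotient defining $d_{\rho(\alpha),(\rho(D),E)}(\rho(\Lambda),\rho(\Lambda'))$, leaving its value unchanged; equality of the two suprema follows at once. Concretely, for a finite collection $\bc_m = \{\stob(c_1^{(m)},\epsilon_1^{(m)}) \ld \stob(c_{i_m}^{(m)},\epsilon_{i_m}^{(m)})\}$ I would set $\rho(\bc_m) := \{\stob(\rho(c_1^{(m)}),\epsilon_1^{(m)}) \ld \stob(\rho(c_{i_m}^{(m)}),\epsilon_{i_m}^{(m)})\}$. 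First I would check this is a legitimate element of $\bc_{(\rho(D),E)}$: the new centers lie in $\rho(D)$, the radii are unchanged and still in $E$, and the mutual separation condition survives because $\rho$ is an isometry, $\|\rho(c_q^{(m)}) - \rho(c_k^{(m)})\| = \|c_q^{(m)} - c_k^{(m)}\| \geq 1$. Applying the same construction to $\rho^{-1} \in O(n,\rb)$ furnishes the inverse, so $\bc_m \mapsto \rho(\bc_m)$ is a bijection $\bc_{(D,E)} \to \bc_{(\rho(D),E)}$.

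Next I would handle the two pieces of each quotient separately. For the denominator, since $\rho \in O(n,\rb)$ preserves the Euclidean norm one has $\|\rho(\alpha)\| = \|\alpha\|$ and $\|\rho(\alpha) - \rho(c_q^{(m)})\| = \|\rho(\alpha - c_q^{(m)})\| = \|\alpha - c_q^{(m)}\|$, so the denominator attached to $\rho(\bc_m)$ at base point $\rho(\alpha)$ equals, term by term, the denominator attached to $\bc_m$ at $\alpha$. For the numerator the key identity is $\phi_{\rho(\bc_m)}(\rho(\Lambda)) = \phi_{\bc_m}(\Lambda)$. Expanding, $\rho(\Lambda)$ is the point set $\{\rho(\Lambda_i)\}$, so the inner sum over points is an unordered sum over exactly these images (the choice of enumeration is irrelevant since $f$ has compact support and only finitely many terms are nonzero). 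Using linearity, $\rho(\Lambda_i) - \rho(c) = \rho(\Lambda_i - c)$ and hence $\tfrac{\rho(\Lambda_i)-\rho(c)}{\epsilon} = \rho\!\left(\tfrac{\Lambda_i - c}{\epsilon}\right)$; then the invariance property \eqref{sondeii}, $f(\rho(t)) = f(t)$, gives $f\!\left(\rho\!\left(\tfrac{\Lambda_i - c}{\epsilon}\right)\right) = f\!\left(\tfrac{\Lambda_i - c}{\epsilon}\right)$ termwise. Summing over points and over the balls of $\bc_m$ yields $\phi_{\rho(\bc_m)}(\rho(\Lambda)) = \phi_{\bc_m}(\Lambda)$, and likewise for $\Lambda'$, so the numerators agree.

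Combining the two computations, the quotient indexed by $\rho(\bc_m)$ in $d_{\rho(\alpha),(\rho(D),E)}(\rho(\Lambda),\rho(\Lambda'))$ equals the quotient indexed by $\bc_m$ in $d_{\alpha,(D,E)}(\Lambda,\Lambda')$, and since $\bc_m \mapsto \rho(\bc_m)$ is a bijection, taking suprema gives the asserted equality. I do not expect a serious obstacle here: the entire argument rests on the fact that $\rho$ is a linear isometry together with the rotational invariance \eqref{sondeii} of the probe function $f$ — indeed this is precisely the ingredient that \eqref{sondeii} was introduced to supply, and without it the numerator identity would fail. The only point demanding a little care is the remark that the enumeration of $\rho(\Lambda)$ plays no role, which is immediate because $\phi_{\bc_m}$ is an unordered sum over the points of its argument.
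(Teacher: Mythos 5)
Your proof is correct and follows essentially the same route as the paper: rotate each collection $\bc_{m}$ to $\bc_{m}^{(\rho)}\in\bc_{(\rho(D),E)}$, use the isometry property of $\rho$ for the denominator and the invariance \eqref{sondeii} for the numerator identity $\phi_{\bc_{m}^{(\rho)}}(\rho(\Lambda))=\phi_{\bc_{m}}(\Lambda)$, then pass to the supremum over the resulting bijection of indexing families. Your write-up is in fact a little more explicit than the paper's (the inverse via $\rho^{-1}$ and the termwise computation), and no step is missing.
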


\begin{proof}
Let $(D,E) \in \ec$ and
$\bc_{m} \in \bc_{(D,E)}$
with $$\bc_{m} =
\{\stob(c_{1}^{(m)},
\epsilon_{1}^{(m)}),
\stob(c_{2}^{(m)},
\epsilon_{2}^{(m)}),
\ldots,
\stob(c_{i_{m}}^{(m)},
\epsilon_{i_{m}}^{(m)})\}.$$
The following inequalities
hold:
$$\| c_{q}^{(m)} - c_{k}^{(m)} \| \geq 1
\quad ~\mbox{for all}~ 
1 \leq q, k \leq i_{m}~ \mbox{with}
q \neq k.$$
Let $\rho \in O(n, \rb)$.
The collection $\bc_{m}$
is in one-to-one correspondence
with
the collection of open balls
$$\bc_{m}^{(\rho)} :=
\{\stob(\rho(c_{1}^{(m)}),
\epsilon_{1}^{(m)}),
\stob(\rho(c_{2}^{(m)}),
\epsilon_{2}^{(m)}),
\ldots,
\stob(\rho(c_{i_{m}}^{(m)}),
\epsilon_{i_{m}}^{(m)})\} \in
\bc_{(\rho(D),E)},$$ 
where the
following inequalities 
$$\| \rho(c_{q}^{(m)}) - \rho(c_{k}^{(m)}) \| \geq 1$$
are still true
for all
$1 \leq q, k \leq i_{m}$
with
$q \neq k$.
By \eqref{sondeii} 
the following equalities hold:
$$\phi_{\bc_{m}}
(\Lambda) = \phi_{\bc_{m}^{(\rho)}}
(\rho(\Lambda)).$$
Hence, for a given ~$\alpha \in \rb^{n}$, 
by
taking the supremum over all the
collections 
~$\bc_{m} \in \bc_{(D,E)}$~ 
of the following identity:
\\
\\
$\displaystyle
\frac{\left|
\phi_{\bc_{m}}
(\Lambda) -
\phi_{\bc_{m}}
(\Lambda')
\right|}{
\frac{1}{2} + \|\alpha\| +
\|\alpha - c_{1}^{(m)}\|+ \ldots +
\|\alpha - c_{i_{m}}^{(m)}\|
} =$\\
$\displaystyle
\mbox{ } \hspace{3cm}
\frac{\left|
\phi_{\bc_{m}^{(\rho)}}
(\rho(\Lambda)) -
\phi_{\bc_{m}^{(\rho)}}
(\rho(\Lambda')\right|
}{
\frac{1}{2}+
\|\rho(\alpha)\| +
\|\rho(\alpha) - \rho(c_{1}^{(m)})\|
+ \ldots +
\|\rho(\alpha) - \rho(c_{i_{m}}^{(m)})\|
}
$\\
\\
we deduce
the claim.
\end{proof}

By taking now the supremum of
$d_{\alpha, (D,E)}
(\Lambda, \Lambda')$
over all ~$\alpha \in \rb^{n}$~
and ~$(D,E) \in \ec$ we deduce from
Lemma \ref{rotation}
that
$$d(\Lambda, \Lambda') =
d(\rho(\Lambda), \rho(\Lambda'))$$
for all $\Lambda, \Lambda' \in \uc\dc$
and ~$\rho \in O(n, \rb)$
as claimed.

\vspace{0.5cm}

\section{Proof of Theorem \ref{metricuniv}}
\label{S6}

The metric ~$d$~ on
~$\uc\dc$~ (Theorem \ref{UDselection}) 
has the advantage 
to make compact the metric space 
~$(\uc\dc, d)$~ but, by
the way it is constructed,
the disadvantage to use a
base point (the origin) in the ambient 
space $\rb^{n}$. 
We now remove this disadvantage but
the counterpart is that 
the precompactness 
of the metric space
~$\uc\dc$ will be lost.
In order to do this, let us first
define
the new collection of metrics ~$(d_{x})$~
on ~$\uc\dc$ indexed by $x \in \rb^n$ by 
$$d_{x}
(\Lambda, \Lambda') =
d(\Lambda - x, \Lambda' - x), 
\hspace{1cm} \Lambda, \Lambda' \in \uc\dc.$$ 
Let us remark
that the metric spaces
$(\uc\dc, d_{x}), x \in \rb^{n},$
are all compact 
(by Theorem \ref{UDselection}).

\begin{definition}
\label{defiD}
Let $D$ be the metric on $\uc\dc$,
valued in $[0, 1]$, 
defined by
$$D(\Lambda, \Lambda') :=
\sup_{x \in \rb^{n}} \,
d_{x}(\Lambda, \Lambda')\, ,
\qquad \mbox{for} ~~\Lambda, \Lambda' \in \uc\dc.$$
The metric ~$D$~ is called 
{\it the metric of the
proximity of points}, or {\it pp-metric}. 
\end{definition}

{\it Proof of i)}:  
By construction, 
$D$ is invariant by the translations of
$\rb^{n}$. Let us prove 
its invariance
by the orthogonal group $O(n, \rb)$.
Let $\Lambda, \Lambda' \in
\uc\dc$~ and 
$x \in \rb^{n}, \rho \in
O(n, \rb)$. Since
$$d(\Lambda, \Lambda') =
d(\rho(\Lambda), \rho(\Lambda'))$$
by Lemma \ref{rotation}, we
deduce 
$$d_{x}(\Lambda, \Lambda') =
d(\Lambda - x, \Lambda' - x)=
d(\rho(\Lambda) - \rho(x), \rho(\Lambda') - \rho(x))
= d_{\rho(x)}(\rho(\Lambda), \rho(\Lambda')).$$
Hence,
$$\sup_{x \in \rb^{n}} d_{x}(\Lambda, \Lambda')=
\sup_{x \in \rb^{n}} d_{\rho(x)}(\rho(\Lambda), \rho(\Lambda')).$$
This implies 
$$D(\Lambda, \Lambda') =
D(\rho(\Lambda), \rho(\Lambda')).$$

{\it Proof of ii)}:
any Cauchy sequence for
the pp-metric $D$ is in particular
a Cauchy sequence
for the metric $d_{x}$
for all ~$x \in \qb^{n}$.
But $\qb^{n}$ is countable. 
Therefore, 
from any Cauchy sequence for $D$, 
a subsequence which converges 
for all the metrics ~$d_{x}, x 
\in \qb^{n}$, can be extracted
by a diagonalisation process over all
$x \in \qb^{n}$.
Since ~$\qb^{n}$~ is dense in 
$\rb^{n}$, that 
$$\sup_{x \in \qb^{n}} \,
d_{x}(\Lambda, \Lambda') = \sup_{x \in \rb^{n}} \,
d_{x}(\Lambda, \Lambda') \qquad 
\mbox{for all}
~\Lambda, \Lambda' \in \uc\dc_{r}$$
this subsequence, extracted by diagonalization, 
also converges for the
metric ~$D$. This prove the completeness
of the metric space 
$(\uc\dc, D)$.

{\it Proof of iii)}: 
we will use the pointwise pairing 
property of the metrics $d_x$ recalled
in the following Lemma.

\begin{lemma}
\label{pairingr1}
Let $x \in \rb^{n}$. 
Let 
~$\Lambda,
\Lambda' \in \uc\dc$~
assumed non-empty
and define
~$l_{x} := \inf_{\lambda \in \Lambda} \|\lambda - x\| 
< +\infty$. Let
$\epsilon \in (0, \frac{1}{1 + 2 l_{x}})$~
and let us assume that 
$d_{x}(\Lambda, \Lambda') <
\epsilon$. 
Then, for all
$\lambda \in \Lambda$ such that
$\|\lambda - x\| < \frac{1-\epsilon}{2 \epsilon}$,
\begin{itemize}
\item[(i)] there exists a unique
$\lambda' \in \Lambda'$ such that
$\|\lambda' - \lambda\| < 1/2$,
\item[(ii)] this pairing satisfies the inequality
$\|\lambda' - \lambda\| \leq 
(1/2+ \| \lambda - x\|) \epsilon$.
\end{itemize}
\end{lemma}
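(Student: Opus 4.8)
The plan is to detect the paired point $\lambda'$ by probing the translated sets $\Lambda - x$ and $\Lambda' - x$ with a single open ball of carefully chosen radius, and to read off both the existence of $\lambda'$ and the bound (ii) directly from the hypothesis $d_{x}(\Lambda,\Lambda') < \epsilon$. Write $r := \|\lambda - x\|$ and $\delta := \epsilon\,(1/2 + r)$. The first thing to record is that the hypothesis $r < \frac{1-\epsilon}{2\epsilon}$ is exactly equivalent to $\delta < 1/2$; this is what makes $\delta$ an admissible probe radius in $(0,1/2)$ and simultaneously forces $\lambda'$ into a ball of radius $< 1/2$.

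The uniqueness half of (i) I would dispatch first, since it needs nothing but the packing hypothesis: if $\lambda'_{1},\lambda'_{2} \in \Lambda'$ both satisfy $\|\lambda'_{j} - \lambda\| < 1/2$, then $\|\lambda'_{1} - \lambda'_{2}\| < 1$, forcing $\lambda'_{1} = \lambda'_{2}$ because $\Lambda'$ is uniformly discrete of constant $1$.

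For existence together with (ii), I would take the base point $\alpha = 0$ and the one-ball family $\bc = \{\stob(\lambda - x,\,\delta)\}$, which is a legitimate choice because $\ec$ runs over all countable $(D,E)$ and a single ball automatically satisfies the separation constraint. Since $d$ is the supremum of the $d_{\alpha,(D,E)}$ and each $d_{\alpha,(D,E)}$ is itself a supremum over such families, this probe yields
$$\frac{\bigl|\phi_{\bc}(\Lambda - x) - \phi_{\bc}(\Lambda' - x)\bigr|}{1/2 + r} \;\le\; d_{x}(\Lambda,\Lambda') \;<\; \epsilon,$$
that is $\bigl|\phi_{\bc}(\Lambda - x) - \phi_{\bc}(\Lambda' - x)\bigr| < \delta$. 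Because $\Lambda$ has constant $1$ and $\delta < 1/2$, the ball $\stob(\lambda - x,\delta)$ meets $\Lambda - x$ only at its centre, so $\phi_{\bc}(\Lambda - x) = \delta\,f(0) = \delta$ by \eqref{sondei}; and $\phi_{\bc}(\Lambda' - x) = \sum_{i} \delta\,f\bigl((\Lambda'_{i} - \lambda)/\delta\bigr)$ has at most one nonzero term, each coming from a point with $\|\Lambda'_{i} - \lambda\| \le \delta$. Were $\phi_{\bc}(\Lambda' - x)$ equal to $0$, the left-hand side would equal $\delta$, contradicting the strict inequality; hence there is a point $\lambda' \in \Lambda'$ with $\|\lambda' - \lambda\| \le \delta = \epsilon(1/2 + r)$, which is (ii), and since $\delta < 1/2$ this is the $\lambda'$ produced in (i).

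The only place demanding care — and the single genuine obstacle — is the bookkeeping between the translation by $x$ and the choice of base point: one must verify that probing $\Lambda - x$ by a ball centred at $\lambda - x$ with $\alpha = 0$ produces precisely the denominator $1/2 + \|\lambda - x\|$, since it is this denominator that delivers the exact constant $\epsilon(1/2 + \|\lambda - x\|)$ in (ii). I note that the argument uses only $f(0) = 1$, the bounds $0 \le f \le 1$, the support condition on $f$, and the uniform discreteness of both sets; neither the rotation invariance \eqref{sondeii} nor the growth estimate \eqref{sondeiii} is needed here.
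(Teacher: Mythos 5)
Your proof is correct. Note that the paper gives no internal proof of Lemma \ref{pairingr1}: it simply cites Proposition 3.6 of \cite{murazvergergaugry2}. Your single-ball probe is, however, exactly the argument that the normalisation $1/2+\|\alpha\|+\sum_q\|\alpha-c_q^{(m)}\|$ in \eqref{ad1} is designed to support, and it is essentially the proof of the cited proposition. The delicate points all check out: $\delta:=\epsilon(1/2+\|\lambda-x\|)<1/2$ is indeed equivalent to $\|\lambda-x\|<\frac{1-\epsilon}{2\epsilon}$, so $\delta$ is an admissible radius (recall $E\subset(0,1/2)$) and the singleton collection $\{\stob(\lambda-x,\delta)\}$ belongs to $\bc_{(D,E)}$ with the separation condition holding vacuously; with $\alpha=0$ the denominator in \eqref{ad1} is exactly $1/2+\|\lambda-x\|$; $\phi_{\bc}(\Lambda-x)=\delta f(0)=\delta$ because no other point of $\Lambda$ lies within $\delta<1/2$ of $\lambda$; and if $\Lambda'$ had no point within distance $\delta$ of $\lambda$, the quotient would equal $\epsilon$, contradicting $d_{x}(\Lambda,\Lambda')<\epsilon$. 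The support condition on $f$ then yields $\|\lambda'-\lambda\|\le\delta$, which is (ii), and $\delta<1/2$ together with the uniform discreteness of $\Lambda'$ gives existence and uniqueness in (i). Your closing observation is also accurate: only \eqref{sondei} and the support of $f$ are used here, while \eqref{sondeii} and \eqref{sondeiii} serve other purposes (rotation invariance, and control in the opposite direction, respectively).
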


\begin{proof}
See Proposition 3.6 in \cite{murazvergergaugry2}.
\end{proof}

Let ~$0 < \epsilon < 1$~ and
suppose that 
$\Lambda, \Lambda' \in \uc\dc$
are non-empty
and satisfy
$D(\Lambda, \Lambda') < \epsilon$.
This implies
$$d_{\lambda}(\Lambda, \Lambda')
< \epsilon \qquad \mbox{for all}~ 
\lambda \in \Lambda.$$ 
From Lemma
\ref{pairingr1}, restricting
$x$ to all the elements
$\lambda$ of $\Lambda$,
we deduce 
$$\forall \lambda \in \Lambda, \, 
\exists
\lambda' \in \Lambda' 
~\mbox{(unique) such that}~ 
\|\lambda - \lambda'\| < \epsilon/2.$$
This proves the existence of unique pointwise 
pairings of points 
and the pointwise 
pairing property for $D$.

{\it Proof of iv)}:
let us show that
$$\uc\dc \times\rb^n ~~\to~~ \uc\dc$$
$$(\Lambda, t) ~~\to~~ \Lambda + t$$
is continuous.
Let $\Lambda_{0} \in \uc\dc$ and
$t_0 \in \rb^n$. First, by
the pointwise pairing property
given by iii), we deduce
$$\lim_{t \to 0}
D(\Lambda_{0} + t, \Lambda_{0}) =
0.$$ 
Let $0 < \epsilon < 1$.
Then, there exists $\eta > 0$ such that
$$|t-t_{0}| < \eta~ \Longrightarrow
~D(\Lambda_{0} + (t-t_{0}), \Lambda_{0})
< \epsilon/2.$$
Hence, for all 
$\Lambda \in \uc\dc$ such that
$D(\Lambda, \Lambda_{0}) < \epsilon/2$
and $t \in \rb^{n}$ such that
$|t-t_{0}| < \eta$, we have:
$$D(\Lambda + t, \Lambda_{0}+t_{0})
= D(\Lambda + (t-t_{0}), \Lambda_{0})$$
$$
\leq D(\Lambda + (t-t_{0}), \Lambda_{0} + (t-t_{0}))
+ D(\Lambda_{0} + (t-t_{0}), \Lambda_{0})$$
$$
= D(\Lambda, \Lambda_{0}) +
D(\Lambda_{0} + (t-t_{0}), \Lambda_{0})
\leq \epsilon/2 + \epsilon/2 = \epsilon.$$
We deduce the claim.

{\it Proof of (ii) (continuation)}:
let us prove that $(\uc\dc, D)$ is
locally compact. 
The Hausdorff metric
$\Delta$
is defined on the set 
$\fc(\rb^{n})$ of
the non-empty closed subsets    
of $\rb^{n}$ as follows:
$$\Delta(\Lambda, \Lambda') :=
\max
\left\{
\, \inf
\{
\epsilon \mid
\Lambda' \subset
\Lambda + B(0,\epsilon)
\},
\, \inf
\{
\epsilon \mid
\Lambda \subset
\Lambda' + B(0,\epsilon)
\}
\right\}$$
in particular 
for $\Lambda, \Lambda' \in 
\uc\dc \setminus \{\emptyset\}$.
$\uc\dc \setminus \{\emptyset\}$ is closed in
the complete space 
$(\fc(\rb^{n}), \Delta)$. Then
$\uc\dc \setminus \{\emptyset\}$
is complete for $\Delta$.
On the space
$\uc\dc \setminus \{\emptyset\}$, the two metrics
$D$ and $\Delta$ are
equivalent.
The element
$\emptyset$ (system of spheres with no sphere)
is isolated in
$\uc\dc$ for $D$.
Hence, it possesses a neighbourhood
(reduced to itself) 
whose closure is compact.
Now, if $\Lambda
\in \uc\dc \setminus \{\emptyset\}$~
and $0 < \epsilon < 1$,
the open neighbourhood 
$\{\Lambda' \in \uc\dc \mid
\Lambda' \subset \Lambda + \stob(0, \epsilon)
\}$ of $\Lambda$
admits $\{\Lambda' \in \uc\dc \mid
\Lambda' \subset \Lambda + B(0, \epsilon)
\}$~ as closure which is obviously precompact,
hence compact,
for 
$D$ or $\Delta$. 
We deduce the claim. 
\vspace{0.5cm}

\section{Proofs of Theorem \ref{maxsphere} and Theorem
\ref{complsat}}
\label{S7}

Assume that there does not exist
$\Lambda \in \uc\dc$
such that 
\eqref{maximu} holds.
Then, by definition, there exists a sequence 
$(\Lambda_{i})_{i \geq 1}$ such that
$\Lambda_{i} \in \uc\dc$
and 
$$\lim_{i \to +\infty} \delta(\bc(\Lambda_{i})) 
= \delta_n$$
(as a sequence of real numbers).
\begin{lemma}
\label{subseq}
There exists a subsequence
$(\Lambda_{i_j})_{j \geq 1}$
of the sequence $(\Lambda_{i})_{i \geq 1}$ 
which converges for ~$D$.
\end{lemma}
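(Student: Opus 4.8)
The plan is to produce the subsequence from the compactness already available for the base-point metrics, the density hypothesis being used to guarantee that the limit is a genuine, nonempty $\uc\dc$-set rather than an escaping or increasingly gappy configuration. The delicate point is that $(\uc\dc, D)$ is only locally compact (Theorem \ref{metricuniv} ii)), so an arbitrary sequence need not have a $D$-convergent subsequence; by contrast each metric $d_x$ is isometric to $d$ through the translation $\Lambda \mapsto \Lambda - x$, so each $(\uc\dc, d_x)$ is compact by Theorem \ref{UDselection}. Since $D = \sup_{x \in \rb^n} d_x$ and, on nonempty sets, $D$ is equivalent to the Hausdorff metric $\Delta$, the whole difficulty is to upgrade compactness in each individual $d_x$ to compactness in the supremum $D$, and this is exactly where the density must enter.

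First I would replace each $\Lambda_i$ by a saturated packing containing it, which exists by Zorn's lemma; since $\delta(\bc(\cdot))$ is monotone under inclusion and bounded above by $\delta_n$, the densities still tend to $\delta_n$. Saturation forces every point of $\rb^n$ to lie within distance $1$ of a sphere centre, so the resulting sets are Delone with uniform constants (uniformly discrete of constant $1$ and relatively dense of constant $1$), independently of $i$. This uniform relative denseness is precisely the property that the density hypothesis alone does not supply—a packing of density close to $\delta_n$ may still contain arbitrarily large holes near a drifting region—and it is what confines the sequence to a single compact family.

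I would then extract as follows: by compactness of each $(\uc\dc, d_x)$ and a diagonal process over the countable set $\qb^n$ (as in the proof of completeness in Theorem \ref{metricuniv} ii)), pass to a subsequence converging for every $d_x$ with $x \in \qb^n$. Lemma \ref{pairingr1} furnishes, for each such $x$, a unique pointwise pairing of nearby points, so these local limits are mutually consistent and assemble into a single Delone set $\Lambda \in \uc\dc$. The main obstacle is the final upgrade from convergence in each $d_x$ ($x$ rational) to convergence in $D = \sup_{x \in \rb^n} d_x = \sup_{x \in \qb^n} d_x$, since pointwise convergence of a family does not by itself control its supremum. Here the uniform Delone bounds do the work: through the equivalence $D \sim \Delta$ they bound, uniformly in the base point, the number of points that can disagree in any ball and prevent a discrepancy from being carried off to infinity by a drifting base point, which yields the uniform-in-$x$ estimate and hence $D(\Lambda_{i_j}, \Lambda) \to 0$. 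Finally, continuity of the density (Theorem \ref{extract}) gives $\delta(\bc(\Lambda)) = \lim_{j} \delta(\bc(\Lambda_{i_j})) = \delta_n$, the conclusion that drives the existence argument.
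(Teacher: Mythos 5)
Your extraction mechanism is the same as the paper's: each $(\uc\dc,d_x)$ is compact because $\Lambda\mapsto\Lambda-x$ is an isometry onto $(\uc\dc,d)$, one diagonalizes over the countable dense set $\qb^{n}$, and one passes to $D$ via $\sup_{x\in\qb^{n}}d_x=\sup_{x\in\rb^{n}}d_x$. But your preliminary saturation step is a genuine defect with respect to the statement being proved: replacing each $\Lambda_i$ by a saturated superset $\tilde\Lambda_i\supset\Lambda_i$ produces a \emph{different} sequence, and a $D$-convergent subsequence of $(\tilde\Lambda_i)$ is not a subsequence of $(\Lambda_i)$, which is exactly what Lemma \ref{subseq} asserts (and what the proof of Theorem \ref{complsat} later reuses, where the $\Lambda_i$ are prescribed by the resaturation process and cannot be replaced). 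The paper's proof uses neither saturation nor the hypothesis $\delta(\bc(\Lambda_i))\to\delta_n$, and the authors explicitly remark after the proof of Theorem \ref{maxsphere} that saturation of the $\Lambda_{i_j}$ is not needed. So the extra step is not a harmless strengthening; it changes the conclusion.

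The point you correctly single out as delicate --- upgrading convergence for every $d_x$, $x\in\qb^{n}$, to convergence for the supremum $D$ --- is indeed where the real content lies, and the paper disposes of it only by the identity $\sup_{x\in\qb^{n}}d_x=\sup_{x\in\rb^{n}}d_x$, which addresses the density of $\qb^{n}$ in $\rb^{n}$ but not the uniformity in $x$ of the convergence. However, your proposed resolution is only asserted (``yields the uniform-in-$x$ estimate''), and the mechanism you invoke does not obviously supply it: two Delone sets with the same constants can coincide on an arbitrarily large ball and still differ by a fixed Hausdorff amount in a region drifting to infinity, so a sequence of uniformly Delone sets can converge in every $d_x$ while $D$ to any candidate limit stays bounded below. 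Uniform relative denseness bounds how many points lie in a given ball, but it does not prevent the locus of disagreement from escaping to infinity, which is precisely the failure mode of a pointwise-to-uniform upgrade. As written, your proof therefore replaces the paper's one-line appeal by a claim that is neither proved nor evidently true, while simultaneously invalidating the ``subsequence of $(\Lambda_i)$'' conclusion through the saturation step.
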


\begin{proof}
Indeed, the sequence 
$(\Lambda_{i})_{i \geq 1}$
may be viewed as 
a sequence in the compact space
$(\uc\dc, d_{x})$ for any
$x \in \qb^{n}$. 
Therefore, for all
$x \in \qb^{n}$, we can extract a 
subsequence from it which converges
for the metric $d_{x}$. 
Iterating this extraction by a diagonalization
process over all $x \in \qb^{n}$, since
$\qb^{n}$ is countable, shows that we
obtain a subsequence which converges for
all the metrics $d_{x}$. Since
$\qb^{n}$ is dense in $\rb^{n}$, we obtain
a convergent sequence $(\Lambda_{i_{j}})_{j \geq 1}$
for $D$ since
$$\sup_{x \in \rb^{n}} d_{x}
= \sup_{x \in \qb^{n}} d_{x}.$$
\end{proof}

\begin{theorem}
\label{extract}
The density function
$\Lambda \to \delta(\bc(\Lambda)) = 
\|\chi(\bc(\Lambda))\|_1$
is continuous on 
$(\uc\dc, D)$ and
locally constant. 
\end{theorem}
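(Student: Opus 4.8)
The plan is to prove the stronger assertion that the density function $\Lambda\mapsto\delta(\bc(\Lambda))$ is \emph{locally constant}, from which continuity on $(\uc\dc,D)$ is automatic. The first step is to show that the volume density equals the ball-volume times the asymptotic density of centres, so that it depends only on the counting function of $\Lambda$. Writing $v:=\mbox{vol}(B(0,1/2))$ and $N_{\Lambda}(R):=\Card(\Lambda\cap B(0,R))$, I would use the disjointness of the balls $\lambda+B(0,1/2)$ in the packing $\bc(\Lambda)$: such a ball lies entirely in $B(0,R)$ once its centre lies in $B(0,R-1/2)$, and it can meet $B(0,R)$ only if its centre lies in $B(0,R+1/2)$, so
\[ v\,N_{\Lambda}(R-1/2)\;\leq\;\mbox{vol}\Bigl(\bigcup_{\lambda\in\Lambda}(\lambda+B(0,1/2))\cap B(0,R)\Bigr)\;\leq\;v\,N_{\Lambda}(R+1/2). \]
Dividing by $\mbox{vol}(B(0,R))$ and using $\mbox{vol}(B(0,R\pm 1/2))/\mbox{vol}(B(0,R))\to 1$, the two extreme terms share the same $\limsup$, giving
\[ \delta(\bc(\Lambda))\;=\;v\cdot\overline{\mathrm{dens}}(\Lambda),\qquad \overline{\mathrm{dens}}(\Lambda):=\limsup_{R\to+\infty}\frac{N_{\Lambda}(R)}{\mbox{vol}(B(0,R))}. \]

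The second step turns the pointwise pairing property into exact invariance of $\overline{\mathrm{dens}}$. Fix a non-empty $\Lambda$ and choose $\epsilon$ with $0<\epsilon<\min(1,D(\Lambda,\emptyset))$; then every $\Lambda'$ with $D(\Lambda,\Lambda')<\epsilon$ is non-empty and, by part (iii) of Theorem~\ref{metricuniv}, each $\lambda\in\Lambda$ has a unique partner $\lambda'\in\Lambda'$ with $\|\lambda-\lambda'\|<\epsilon/2$. First I would upgrade this one-sided assignment to a bijection $\Lambda\to\Lambda'$: applying (iii) with the roles of $\Lambda,\Lambda'$ reversed produces a unique $\mu\in\Lambda$ with $\|\mu-\lambda'\|<\epsilon/2$, and $\mu\neq\lambda$ would force $\|\lambda-\mu\|<\epsilon<1$, contradicting uniform discreteness of constant $1$; hence $\mu=\lambda$ and the two assignments are mutually inverse. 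Because paired centres differ by less than $\epsilon/2$, the bijection yields
\[ N_{\Lambda}(R-\epsilon/2)\;\leq\;N_{\Lambda'}(R)\;\leq\;N_{\Lambda}(R+\epsilon/2) \]
for every $R$; dividing by $\mbox{vol}(B(0,R))$ and letting $R\to+\infty$ exactly as above gives $\overline{\mathrm{dens}}(\Lambda')=\overline{\mathrm{dens}}(\Lambda)$, whence $\delta(\bc(\Lambda'))=\delta(\bc(\Lambda))$ by the first step. Thus $\delta\circ\bc$ is constant on this $D$-ball about $\Lambda$.

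It remains to treat the empty packing: $\emptyset$ is isolated in $(\uc\dc,D)$ (proof of Theorem~\ref{metricuniv}(ii)), so $\{\emptyset\}$ is itself a neighbourhood on which the density is the constant $0$. Combining the two cases, $\delta\circ\bc$ is locally constant on all of $\uc\dc$, and therefore continuous, which is the assertion. The only delicate point is the passage from the one-directional pairing of Theorem~\ref{metricuniv}(iii) to a genuine bijection of the centre sets; this is precisely where the hypotheses $\epsilon<1$ and uniform discreteness of constant $1$ enter, and it is what makes the density \emph{exactly}---not merely approximately---invariant under sufficiently $D$-small deformations. Everything else reduces to the routine comparison of counting functions over concentric balls whose radii differ by the bounded amounts $1/2$ and $\epsilon/2$, using only that the ratio of their volumes tends to $1$.
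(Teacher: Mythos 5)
Your proof is correct and follows essentially the same route as the paper's: both reduce the density to the asymptotic counting function of the centres and then use the pointwise pairing property of Theorem~\ref{metricuniv}(iii) to trap $N_{\Lambda'}(R)$ between $N_{\Lambda}(R\mp\epsilon/2)$, the thin annulus being asymptotically negligible. Your write-up is somewhat more complete, in that it upgrades the pairing to a genuine bijection via uniform discreteness, treats the isolated point $\emptyset$ separately, and replaces the paper's citation of Rogers (Theorem~1.8) by the elementary observation that $\mathrm{vol}(B(0,R\pm c))/\mathrm{vol}(B(0,R))\to 1$.
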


\begin{proof}
Let  $\Lambda_{0} \in
\uc\dc, ~T > 0$~ large
enough
and $0 < \epsilon < 1$. 
By Lemma
\ref{pairingr1}
and
the pointwise pairing property
Theorem \ref{metricuniv} iii), 
any 
~$\Lambda \in \uc\dc$~ such that
~$D(\Lambda, \Lambda_{0}) 
< \epsilon$~ is such that
the number of elements
$\#\{
\lambda \in \Lambda ~|~
\lambda \in B(0,T)\}$~
of ~$\Lambda$~ within ~$B(0,T)$~ 
satisfies the following inequalities:
$$
\#\{
\lambda \in \Lambda_{0} \mid
\lambda \in B(0,T-
\epsilon/2)\}
\leq
\#\{
\lambda \in \Lambda \mid
\lambda \in B(0,T)\}
$$
$$\leq
\#\{
\lambda \in \Lambda_{0} ~|~
\lambda \in B(0,T+
\epsilon/2)\}.
$$
The density of the system of balls
~$\bc(\Lambda)$~ is equal to
$$\delta(\bc(\Lambda)) =
\limsup_{T \to +\infty}
\#\{
\lambda \in \Lambda ~|~
\lambda \in B(0,T)\} 
\left( \frac{1}{2 T}
\right)^{n}.$$
Since the contribution - to the 
calculation of the density -
of the
points of ~$\Lambda_{0}$~ 
which lie in the annulus 
~$B(0,T+
\epsilon/2)
\setminus
B(0,T-
\epsilon/2)$~
tends to zero
when
~$T$~ tends to infinity
by Theorem 1.8
in Rogers \cite{rogers}, we deduce 
that 
$$\delta(\bc(\Lambda)) 
= \delta(\bc(\Lambda_{0})),$$ 
hence the claim.
\end{proof}

Let us now finish the proof of Theorem 
\ref{maxsphere}.
Since the metric space
~$(\uc\dc, D)$~
is complete by 
Theorem \ref{metricuniv},
the subsequence
$(\Lambda_{i_{j}})_{j \geq 1}$ given
by Lemma \ref{subseq} is
such that  
there exists a limit point set
$$\Lambda  =
\lim_{j \to +\infty} \Lambda_{i_{j}} 
\in \uc\dc$$ 
which satisfies, 
by Theorem \ref{extract},
$$\delta_n = \lim_{j \to +\infty}
\delta(\bc(\Lambda_{i_{j}})) =
\delta(\bc(\Lambda)).$$
Contradiction.

Let us remark that,
in
this proof,
we did not need assume that
the elements
$\Lambda_{i_{j}}$ are saturated
(the same Remark holds for $m$-saturation). 

Let us prove Theorem
\ref{complsat}.
From Theorem 
\ref{maxsphere} 
there exists at least one
element of $\uc\dc$, say
$\Lambda$, of density
the packing constant $\delta_n$.
Let us assume that
there is no completely saturated
packing of equal balls of density
$\delta_n$
and let us show the contradiction. 
In particular we assume that
$\Lambda$ is not completetly saturated.

Then there would exist 
an application
$i \to m_{i}$ 
from $\nb \setminus \{0\}$ to
$\nb \setminus \{0\}$ 
and a non-stationary sequence 
$(\Lambda_{i})_{i \geq 1}$
such that
\begin{itemize}
\item[(i)] $\Lambda_{i} \in \uc\dc$ with
$\Lambda_{1} = \Lambda$,
\item[(ii)] ~$\Lambda_{i+1}$ is obtained from
$\Lambda_{i}$~ by removing $m_{i}$
balls and placing $m_{i}+1$ balls
in the holes formed by this removal process,
\item[(iii)] $\delta(\bc(\Lambda_{i})) = \delta_n$
for all $i \geq 1.$
\end{itemize}
This corresponds to a constant adding of new balls
by (ii), but since the density of
$\bc(\Lambda_1)$ is already maximal, equal to
$\delta_n$, this process ocurs
at constant density (iii).

As in the proof of Theorem 
\ref{maxsphere}, we can extract from the sequence
$(\Lambda_{i})_{i \geq 1}$
a subsequence $(\Lambda_{i_{j}})_{j \geq 1}$
which is a Cauchy sequence for $D$.
Since $(\uc\dc, D)$ is complete, there exists
$\Lambda \in \uc\dc$ such that
$$\Lambda = \lim_{j \to +\infty}\,
\Lambda_{i_{j}}.$$
The contradiction comes from
the pointwise pairing property
(iii) in Theorem \ref{metricuniv}
and the continuity
of the density function 
(Theorem \ref{extract}).
Indeed, 
for all $j$ large enough,
$D(\Lambda_{i_{j}}, \Lambda)$ is 
sufficiently small so that
the pointwise pairing property for $D$
prevents the adding of new balls to
$\Lambda_{i_{j}}$ whatever their number
by the process (ii).
Therefore, the subsequence  
$(\Lambda_{i_{j}})_{j \geq 1}$
would be stationary, which is excluded by assumption.  
This gives the claim.


\vskip2cm
\begin{flushleft}
Gilbert MURAZ\\
INSTITUT FOURIER\\
Laboratoire de Math\'ematiques\\
UMR5582 (UJF-CNRS)\\
BP 74\\
38402 St MARTIN D'H\`ERES Cedex (France)\\
\smallskip
{\tt 
Gilbert.Muraz@ujf-grenoble.fr}

Jean-Louis VERGER-GAUGRY\\
INSTITUT FOURIER\\
Laboratoire de Math\'ematiques\\
UMR5582 (UJF-CNRS)\\
BP 74\\
38402 St MARTIN D'H\`ERES Cedex (France)\\
\smallskip
{\tt jlverger@ujf-grenoble.fr}
\end{flushleft}

\end{document}